\theoremstyle{plain}
\newtheorem{theorem}{Theorem}[section]
\newtheorem{lemma}[theorem]{Lemma}
\newtheorem{proposition}[theorem]{Proposition}
\theoremstyle{definition}
\newtheorem{definition}[theorem]{Definition}
\theoremstyle{remark}
\newtheorem{remark}[theorem]{Remark}
\numberwithin{equation}{section}
\newcommand{\rd}{\mathrm{d}}
\newcommand{\dx}{\mathrm{d}x}
\newcommand{\X}{\mathrm{X}}
\newcommand{\Z}{\mathrm{Z}}
\newcommand{\N}{\mathbb{N}}
\newcommand{\R}{\mathbb{R}}
\newcommand{\C}{\mathbb{C}}
\newcommand{\del}{\partial}
\newcommand{\eps}{\varepsilon}
\newcommand{\idot}{\!\cdot\!}
\title[]{Local analytic well-posedness for one-dimensional Vlasov--Dirac--Benney-type equations}
\author[]{Nuno J. Alves}
\author[]{Peter Markowich}
\author[]{Athanasios E. Tzavaras}
\address[N. J. Alves]{
      University of Vienna, Faculty of Mathematics, Oskar-Morgenstern-Platz 1, 1090 Vienna, Austria.}
\email{nuno.januario.alves@univie.ac.at}
\address[P. Markowich]{University of Vienna, Faculty of Mathematics, Oskar-Morgenstern-Platz 1, 1090 Vienna, Austria.}
\email{peter.markowich@univie.ac.at}
\address[A. E. Tzavaras]{King Abdullah University of Science and Technology, CEMSE Division, Thuwal, Saudi Arabia, 23955-6900.}
\email{athanasios.tzavaras@kaust.edu.sa}
\begin{document}

\begin{abstract}
We study a one-dimensional nonlinear Vlasov equation with a local self-consistent force field generated by the density, where the force is given by the spatial derivative of a real-analytic nonlinearity. For small analytic initial data, we prove local-in-time existence and uniqueness of analytic solutions. In particular, this yields a perturbative well-posedness result around the trivial equilibrium. We also give an energy-based representation of weak stationary states and discuss perturbations around spatially homogeneous stationary profiles. The proof relies on a contraction mapping argument in a complete metric space of analytic functions. As a technical byproduct, we establish quantitative composition estimates for analytic nonlinearities in the analytic norms used in the argument.
\end{abstract}

\subjclass[2020]{Primary 35Q83; Secondary 35F25, 82D10} 

\keywords{Vlasov equations, Hamiltonian formulation, analytic solutions, analytic norms, fixed point method, stationary states.}
\maketitle
\thispagestyle{empty}

\section{Introduction}\label{sec:intro}

Kinetic equations of Vlasov type describe the evolution of a (nonnegative) phase-space distribution function
$f=f(t,x,v)$ under the action of a self-consistent force field. A convenient unifying framework
that incorporates several models of interest is provided by the Vlasov-type model
\begin{equation}\label{eq:intro_vlasov}
\partial_t f + v\cdot \nabla_x f - \nabla_x \frac{\delta \mathcal{E}}{\delta \rho}\cdot \nabla_v f = 0,
\qquad
\rho=\int f\,\rd v.
\end{equation}
The self-consistent equation~\eqref{eq:intro_vlasov} is posed for time $t \geq 0$ (although it is time-reversible) on the full phase space $\R_x^n \times \R_v^n$, where $x$ denotes position and $v$ denotes velocity. Here, $\mathcal{E}=\mathcal{E}(\rho)$ is an energy functional depending on the density, and  $\frac{\delta \mathcal{E}}{\delta \rho}$ 
stands for (the generator of) its  functional derivative. 
Different choices of $\mathcal{E}$ lead to different Vlasov-type models, notably the case of the Vlasov--Poisson system and the 
kinetic equation associated with Bohm potentials.

All these models are examples of infinite-dimensional Hamiltonian systems,  $\del_t f = [f, \mathcal{H}]$,
 generated via a Poisson bracket using the Hamiltonian functional
\begin{equation} \label{hamiltonian}
\mathcal{H}(f) = \iint \tfrac{1}{2} |v|^2 f  \, \rd v \, \rd x + \mathcal{E}(\rho), 
\end{equation}
that involves the kinetic energy and the potential energy $\mathcal{E}(\rho)$. This Hamiltonian structure was  pioneered by Morrison \cite{morrison1980,morrison1986}
for the Vlasov-Poisson system and the magnetohydrodynamics equations and by Dubrovin--Novikov \cite{DN1983, DN1984} for the equations of one-dimensional hydrodynamics.  
This structure is outlined in Section~\ref{section_structure} in the specific context of \eqref{eq:intro_vlasov}  and its connection for monokinetic
distributions to the Euler flows \eqref{eq:moment_system_general} introduced in \cite{GLT2017} for relative energy calculations.
The reader is referred to \cite{morrison1998} for a survey and detailed references on the subject of Hamiltonian description in fluid mechanics.

A most interesting paradigm of  \eqref{eq:intro_vlasov} arises when the energy is the Fisher information,
\begin{equation} \label{eq:intro_fisher}
\mathcal{E}=\mathcal{E}_{\rm F}(\rho)\coloneqq\int \tfrac12|\nabla\sqrt{\rho}|^2 \, \dx,
\end{equation}
in which case the variational derivative produces the Bohm potential: 
$
\frac{\delta \mathcal{E}_{\rm F}}{\delta\rho}(\rho)= -\frac{\Delta\sqrt{\rho}}{2\sqrt{\rho}}
$.
The induced force in \eqref{eq:intro_vlasov} involves higher derivatives and exhibits a strong sensitivity to the vacuum set $\{\rho = 0 \}$.
This Bohmian regime has been analysed, in particular, from the perspective of monokinetic distributions and phase-space (Bohmian) measures;
see \cite{markowich2010bohmian,markowich2012dynamics,figalli2014wkb}.

\par
A second example, closer to hydrodynamic models, is provided by internal energies
\begin{equation}\label{eq:intro_hydro}
\mathcal{E}=\mathcal{E}_{\rm H}(\rho)\coloneqq \int h(\rho) \, \dx \, ,
\end{equation}
where the internal energy density function $h$ is defined by
\begin{equation}\label{eq:intro_h_function}
h(\rho)=
\begin{dcases}
\tfrac{1}{\gamma-1} \rho^\gamma, & \gamma>1,
\vspace{2mm} \\
 \rho\log\rho, & \gamma=1.
\end{dcases}
\end{equation}
and results in a local force
\begin{equation} \label{eq:intro_forceH}
\nabla_x\frac{\delta\mathcal{E}_{\rm H}}{\delta\rho}= \nabla_x h'(\rho) =
\begin{dcases}
\gamma\rho^{\gamma-2} \nabla_x \rho,  & \gamma > 1, 
\vspace{2mm} \\
\nabla_x \log \rho , & \gamma=1.
\end{dcases}
\end{equation}
Here, the case $\gamma=2$ corresponds to the Vlasov--Dirac--Benney model;
see \cite{jabin2011analytic,bardos2012vlasov,bardos2013cauchy,bardos2015hamiltonian,han2016quasineutral} for well-posedness results and \cite{gibbonstsarev1999} for connections to the theory of water waves and conformal mappings. The singular borderline case $\gamma=1$ has been studied in \cite{carles2017monokinetic}, where existence of monokinetic solutions is proven as limits of the Wigner transform of solutions to a related logarithmic Schr\"{o}dinger equation.
\par At the hydrodynamic (Euler) level, the internal energy functional $\mathcal{E}_{\rm H}$ induces a barotropic pressure law via the identity
\begin{equation} \label{eq:intro_pressure_function}
p(\rho)=\rho h^\prime(\rho)-h(\rho),
\end{equation}
so that $p(\rho) = \rho^\gamma$ for $\gamma \geq 1$ when $h$ is given by \eqref{eq:intro_h_function}. Differentiating \eqref{eq:intro_pressure_function} yields
$p'(\rho)=\rho\,h''(\rho)$, and therefore
\begin{equation} \label{eq:intro_Boltzmann_relation}
\nabla_x p(\rho) = \rho \nabla_x \frac{\delta\mathcal{E}_{\rm H}}{\delta\rho},
\end{equation}
which for $h$ as in \eqref{eq:intro_h_function} corresponds to the Boltzmann relation of adiabatic electrons (see \cite{guo2011global}) when $\gamma=1$, whereas for $\gamma > 1$ it has been deduced from the zero-electron-mass limit of a bipolar Euler--Poisson system \cite{alves2024zero}. 
\par 
A third classical example is obtained when $\mathcal{E}$ represents the electrostatic energy. Formally, one takes
\begin{equation} \label{eq:intro_poisson}
\mathcal{E}=\mathcal{E}_{\rm P}(\rho)\coloneqq \int \tfrac12 |\nabla \phi|^2\,\dx,
\qquad
-\Delta \phi=\rho,
\end{equation}
in which case $\nabla_x \frac{\delta\mathcal{E}_{\rm P}}{\delta\rho}(\rho)=\nabla_x \phi$ and \eqref{eq:intro_vlasov} reduces 
to the Vlasov--Poisson system. The field $\phi$ is nonlocal and, due to the elliptic equation $-\Delta\phi=\rho$, 
the force $-\nabla_x\phi$ gains one derivative from $\rho$. This nonlocal smoothing structure underlies much of its well-posedness theory;
see \cite{bardos1985global,lions1991propagation,pfaffelmoser1992global,han2016quasineutral,ambrosio2017lagrangian}.

\subsection{Main results and strategy}

This work has two objectives. (i) To outline the infinite-dimensional Hamiltonian structure associated with equation \eqref{eq:intro_vlasov}. This part
is formal and is done in Section~\ref{section_structure}. (ii) To establish a local-in-time analytic well-posedness theory for the one-dimensional Vlasov model with local self-consistent field $-\partial_x\Phi(\rho)$, where $\Phi$ is real-analytic on $(-R,R)$ for some $R>0$. As a complement, in the final section we discuss the structure of stationary states and small analytic perturbations of spatially homogeneous profiles.
\par
The study of well-posedness for Vlasov-type models \eqref{eq:intro_vlasov} generated by energy functionals has to deal with the loss of derivatives 
and the problem of vacuum. The difficulty ranges from nonlocal Poisson on one side where a gain of derivatives occurs, to the Bohmian and logarithmic forces on the other extreme, where in addition to the loss of derivatives 
vacuum poses additional singularity issues.
Certain intermediate cases are already well understood, including the classical Vlasov--Poisson system and the Vlasov--Dirac--Benney model. In this paper, as a first step in this program, we focus on the intermediate regime of local singular force fields within a framework of real-analytic functions in one space dimension, for the kinetic model
\begin{equation}\label{vlasov0}
\begin{dcases}
\partial_t f + v\cdot \partial_x f -\partial_x \Phi(\rho) \cdot \partial_v f = 0, \\
\rho = \int f \, \rd v, \\
f|_{t=0} = f_0,
\end{dcases}
\end{equation}
for $(t,x,v) \in [0,T] \times \R^2$, where $\Phi$ is \emph{real-analytic} on $(-R,R)$ in the sense that it admits a globally convergent power series expansion (see Definition~\ref{def:entire}),
\begin{equation} \label{eq:intro_Phi}
\Phi(z)=\sum_{n\ge 0} c_n z^n \qquad (|z| < R).
\end{equation}

This assumption covers a class of local force laws --- for instance, polynomial laws $\Phi(\rho) = \rho^{q+1}$, where  $q \ge 0 $  is an integer, or exponential nonlinearities --- while keeping the nonlinearity compatible with the analytic framework. In the Hamiltonian/energy-driven viewpoint
\eqref{eq:intro_vlasov}, the function $\Phi$ corresponds to the variational derivative
$\delta\mathcal E/\delta\rho$; in particular, in the internal energy case~\eqref{eq:intro_hydro}
one has $\delta\mathcal E/\delta\rho=h'(\rho)$, so $\Phi$ plays the role of the enthalpy function $h'$.
\par  The employed analytic norms enjoy an algebra property 
(Lemmas~\ref{lemma_algebra}--\ref{lemma_algebraH}) and, when combined with the majorant series $\tilde\Phi,\tilde\Phi',\tilde\Phi''$, they provide a robust calculus for controlling repeated Leibniz and chain rules in the composition $x\mapsto \Phi(\rho(x))$. By contrast, singular choices such as 
$\Phi(\rho)=\log\rho$ or forces driven by the Bohm potential fall outside this class. Additional ideas are then required 
to control the analysis near vacuum; we do not address these cases here.
\par 
Our main theorem establishes local-in-time existence and uniqueness of analytic solutions to~\eqref{vlasov0} in a suitable complete metric space, under explicit smallness conditions on the initial datum; see Theorem~\ref{mainthm}. The proof is based on a fixed-point argument applied to a contraction mapping on a certain metric space of analytic functions; see the space \eqref{spaceX} with the norm \eqref{normZ}. A key technical ingredient is a time-dependent analyticity radius $\lambda(t)$. Allowing the radius to decrease in time provides the extra room needed in the analytic estimates to balance the loss of one spatial derivative coming from the local force $-\partial_x\Phi(\rho)$. We emphasize that, unlike a direct application of the Cauchy--Kowalevski theorem (see, \emph{e.g.}\ \cite{folland1995pde}), our proof gives quantitative control in the norms defining \eqref{spaceX} and in \eqref{normZ}, and obtains existence and uniqueness by a contraction argument.
\par 

From the modelling viewpoint, only initial data $f_0\ge 0$ are physically relevant, in which case~$f$ is interpreted as a density in phase space.
For smooth solutions of \eqref{vlasov0}, the equation is a transport equation and $f$ is constant along characteristics; hence the sign of $f$ is preserved, and $f_0\ge 0$ implies $f(t)\ge 0$ on its whole domain of existence. Our analytic fixed-point argument, however, does not require any sign condition on the initial datum.

\subsection{Relation to the literature and contributions}

The analytic well-posedness strategy we use goes back to Jabin and Nouri~\cite{jabin2011analytic} in the study of the
Vlasov--Dirac--Benney case $\Phi(\rho)=\rho$, and was also employed in~\cite{bossy2013local} for a related Vlasov-type
model with stochastic interpretation. More broadly, analytic frameworks for Vlasov equations play a central role in the
work of Mouhot and Villani~\cite{mouhot2011landau} on nonlinear Landau damping and in subsequent developments;
see, for instance, \cite{gagnebin2023landau,hankwan2025linear,nguyen2025remarks}. Related analytic techniques also appear
in the Gevrey setting \cite{bedrossian2016landau, grenier2021landau, ruiz2021gevrey}, which lies outside the framework
considered here. For background on real-analytic functions we refer to~\cite{krantz2002primer}.
\par

The contributions of this paper are threefold.
First, motivated by Morrison's Hamiltonian formalism for the Vlasov--Poisson system \cite{morrison1980,morrison1986},
we extend the Poisson-bracket framework to Hamiltonians of the form \eqref{hamiltonian} with density-dependent potential energy $\mathcal E(\rho)$,
thereby recovering the generalized Vlasov model \eqref{eq:intro_vlasov} and its formal monokinetic reduction to Euler-type flows.
Second, we establish a quantitative local-in-time analytic well-posedness theory for the one-dimensional Vlasov model with local self-consistent field
$-\partial_x\Phi(\rho)$ for general real-analytic nonlinearities $\Phi$.
Compared with~\cite{jabin2011analytic}, we treat a nonlinear dependence $\Phi(\rho)$ beyond the linear case and provide explicit analytic estimates
for the composition $x\mapsto \Phi(\rho(x))$ in terms of the majorant series $\tilde\Phi,\tilde\Phi',\tilde\Phi''$, within a complete time-dependent analytic framework.
The main theorem is formulated as a small-data result around the trivial stationary solution $f\equiv 0$.
Third, in the last section we show that weak stationary states of \eqref{vlasov0} admit an energy-based representation, which strongly constrains their structure. This, in turn, allows us to extend the fixed-point argument to small analytic perturbations of spatially homogeneous stationary profiles.

%

\subsection{Organization of the paper}
Section~\ref{section_structure} provides a formal modelling discussion based on a Hamiltonian formulation, motivating the energy-driven Vlasov structure \eqref{eq:intro_vlasov}. 
Section~\ref{section_analytic} introduces the analytic function spaces and their associated norms, and establishes the basic calculus needed for the nonlinear analysis, including algebra estimates and quantitative bounds for composition with real-analytic functions. Section~\ref{section_main} states the main theorem and proves it by a contraction mapping argument in a suitable time-dependent analytic framework. Finally, Section~\ref{sec:perturb_nontrivial} describes stationary states in terms of the local energy
and discusses perturbations around nontrivial stationary solutions, in particular $x$-independent profiles.
 
\section{Generalized Vlasov equations} \label{section_structure} 
In this section, we extend a Hamiltonian formalism developed for the Vlasov--Poisson system by Morrison \cite{morrison1980,morrison1986} to the 
Hamiltonian $\mathcal{H} (f)$ given by \eqref{hamiltonian},  depending on $f = f(x,v)$ through the kinetic energy and a density-dependent 
potential energy functional $\mathcal{E}(\rho)$. Within this framework we derive the Vlasov model \eqref{eq:intro_vlasov} from the Hamiltonian functional \eqref{hamiltonian}. We also briefly discuss how, under a monokinetic ansatz, the resulting kinetic dynamics formally reduces to Euler flows.
This formal analysis gives rise to an infinite-dimensional Hamiltonian system. Its structure is well-understood 
within the community studying Hamiltonian systems, \emph{e.g.}~\cite{morrison1986,morrison1998}; we 
outline the key steps here for the reader's convenience.

\subsection{Hamiltonian flow}  \label{section_hamiltonian} 
Let $f = f(t,x,v)$ be a phase-space distribution function, evolving in time $t>0$ for $x,v \in \R^d$, with $d \in \N$ being the dimension. 
In this section we indicate how the equation \eqref{eq:intro_vlasov} may be formally interpreted as an infinite-dimensional Hamiltonian
system,
\begin{equation}
\del_t f = [f, \mathcal{H}]
\end{equation}
emerging from the Hamiltonian functional
$$
\mathcal{H}(f) = \iint \tfrac{1}{2} |v|^2 f  \, \rd v \, \rd x + \mathcal{E}(\rho), 
$$
where $\rho = \int f \, \rd v$ denotes the zeroth velocity moment of $f$.

This is achieved by adapting the framework discovered in \cite{morrison1986} for the Vlasov--Poisson system to the present 
generalized Vlasov model.
We work formally and assume that boundary terms vanish (for instance, $x$ periodic and $f$ rapidly decaying in $v$), and
often omit the explicit dependence on $t$.
The Poisson bracket $[\cdot, \cdot]$ between two functionals $A = A(f)$ and $B = B(f)$ is defined by
\begin{equation}\label{poissonbracket}
[ A, B] = \iint f(x', v') \left \{ \frac{\delta A}{\delta f} , \frac{\delta B}{\delta f} \right\} (x', v')  \, \rd x' \, \rd v' \, ,
\end{equation}
where $\delta A /\delta f$, $\delta B /\delta f$ are the generators of functional derivatives of $A$, $B$, while $\{ \cdot, \cdot \}$ stands for the canonical 
Poisson bracket defined for two functions $F(x,v)$, $G(x,v)$ by
\begin{equation*}
\{F,G \} = \nabla_x F \cdot \nabla_v G- \nabla_v F \cdot \nabla_x G .
\end{equation*}
As the canonical Poisson bracket $\{\cdot, \cdot \}$ satisfies the Jacobi identity this property is inherited by the (functional) bracket $[\cdot, \cdot]$.

\par
Choosing $A(f)$ to be the point-evaluation functional, $A(f):=f(x,v)$, and $B(f)$ to be the Hamiltonian functional, a formal calculation outlined 
below yields
\begin{equation}\label{evolutionH}
\partial_t f=[f,\mathcal{H}]=-\left\{f,\frac{\delta\mathcal{H}}{\delta f}\right\},
\end{equation}
where we note that the first Poisson bracket is a bracket of functionals whereas the second is a canonical Poisson bracket of functions.
Indeed, for $A(f)= f  (x,v,t)$ the point evaluation functional we have 
\[
\frac{\delta A}{\delta f}(x',v')=\delta(x-x')\,\delta(v-v'),
\]
while for the Hamiltonian functional $\mathcal{H}$,
$$
\Big \langle \frac{\delta \mathcal{H}}{\delta f} , F \Big \rangle 
= \frac{\rd}{\rd\epsilon} \Big |_{\epsilon = 0} \mathcal{H}(f + \epsilon F)
= \iint \tfrac{1}{2} |v|^2  F \, \rd x \, \rd v + \Big\langle \frac{\delta \mathcal{E}}{\delta \rho} (\rho), \int F \, \rd v \Big\rangle.
$$
Inserting these formulas into \eqref{poissonbracket} gives
\begin{align*}
\big[ f,\mathcal{H} \big] &=  \; \iint f(x' , v') \Big [ 
\nabla_{x'} \Big ( \delta (x- x') \delta (v - v') \Big ) \cdot \nabla_{v'} \Big ( \frac{\delta \mathcal{H}}{\delta f} \Big )
\\
&  \qquad \qquad \qquad \qquad \qquad - \nabla_{v'} \Big ( \delta (x- x') \delta (v - v') \Big ) \cdot \nabla_{x'} \Big ( \frac{\delta \mathcal{H}}{\delta f} \Big ) \Big ] 
 \rd x' \, \rd v'
\\
&= - \iint   \delta (x- x') \delta (v - v')  \Big [ \nabla_{x'}   f (x' , v') \cdot \nabla_{v'} \Big (   \frac{\delta \mathcal{H}}{\delta f}  \Big )
  - \nabla_{v'} f (x' , v') \cdot \nabla_{x'} \Big (  \frac{\delta \mathcal{H}}{\delta f}  \Big ) \Big ] \rd x' \, \rd v'
\\
&= - \Big \{ f , \frac{\delta \mathcal{H}}{\delta f} \Big \} (x,v) 
\\
&= -\Big( v\cdot\nabla_x f - \nabla_x\frac{\delta\mathcal E}{\delta\rho}(\rho)\cdot \nabla_v f\Big).
\end{align*}
Therefore \eqref{evolutionH} is exactly \eqref{eq:intro_vlasov}.

\medskip
\subsection{Monokinetic distributions and Euler systems}
In addition to the zeroth velocity moment
\[
\rho=\int f\,\rd v,
\]
we introduce the first velocity moment
\[
J=\rho u=\int vf\,\rd v.
\]
\par 
A formal computation based on \eqref{eq:intro_vlasov} yields the zero-th and first order moment system
\begin{equation}\label{eq:moment_system_general}
\begin{cases}
\partial_t \rho+\nabla_x\cdot(\rho u)=0,
\\[1mm]
\partial_t(\rho u)+\nabla_x\cdot(\rho u\otimes u)+\nabla_x\cdot P
= -\,\rho\,\nabla_x\dfrac{\delta\mathcal E}{\delta\rho},
\end{cases}
\end{equation}
where the (kinetic) stress tensor $P$ is given by
\begin{equation}\label{eq:stress_tensor}
P=\displaystyle\int (v-u)\otimes (v-u)\,f\,\rd v.
\end{equation}
The moment system \eqref{eq:moment_system_general} is not closed in general, since $P$ depends on the distribution $f$
and requires the second-order moments to be computed.

\par 
Assume now that $f$ is monokinetic, namely
\begin{equation}\label{eq:monokinetic_single}
f(t,x,v)=\rho(t,x)\,\delta\big(v-u(t,x)\big).
\end{equation}
Then $P\equiv 0$ in \eqref{eq:stress_tensor}, and \eqref{eq:moment_system_general} reduces to the compressible Euler-type system
\begin{equation}\label{eq:euler_general_energy}
\begin{cases}
\partial_t \rho+\nabla_x\cdot(\rho u)=0,
\\[1mm]
\partial_t(\rho u)+\nabla_x\cdot(\rho u\otimes u)
= -\,\rho\,\nabla_x\dfrac{\delta\mathcal E}{\delta\rho}.
\end{cases}
\end{equation}
System~\eqref{eq:euler_general_energy} was introduced in \cite{GLT2017} as a
paradigm to develop relative entropy properties for Hamiltonian systems.
Different choices of the energy functional recover several classical fluid models: taking $\mathcal{E}=\mathcal{E}_{\rm F}$~ \eqref{eq:intro_fisher} yields the (quantum) Euler system driven by the Bohm potential, taking $\mathcal{E}=\mathcal{E}_{\rm H}$ ~\eqref{eq:intro_hydro} yields the barotropic compressible Euler system, and taking $\mathcal{E}=\mathcal{E}_{\rm P}$~\eqref{eq:intro_poisson} yields the Euler--Poisson system.

\section{Analytic functions and associated norms} \label{section_analytic}
We start by fixing some notation that will be used throughout the rest of the manuscript.
The sets of positive and nonnegative integers are denoted by $\mathbb{N}$ and $\mathbb{N}_0$,
respectively. For functions depending on $x$, $v$, or both, the corresponding supremum norm,
$\| \idot \|_\infty$, refers to the supremum taken over $x$, $v$, or both, respectively.
Derivatives with respect to $x$ (resp.\ $v$) are denoted by the partial derivative $\partial_x$
(resp.\ $\partial_v$). Derivatives with respect to $t$ (resp.\ $\lambda$) are denoted by the total
derivative $\rd/\rd t$ (resp.\ $\rd/\rd\lambda$).

\subsection{Analytic functions and norms}
We shall deal with analytic functions on $\R^2$ (resp.\ on $\R$) which satisfy the following uniform
$L^\infty$ derivative bounds: there exist constants $C>0$ and $\Lambda>0$ such that
\begin{equation}\label{analyticdef}
 \frac{\Lambda^{k+\ell}}{k!\,\ell!}\,\|\partial_x^k \partial_v^\ell f \|_\infty \le C
 \qquad \forall\, k,\ell \in \N_0 .
\end{equation}
When \eqref{analyticdef} holds we say that $f$ is analytic with (uniform) radius of convergence $\Lambda$.
\par
If $f$ only depends on $x$ (resp.\ $v$), then it is analytic if \eqref{analyticdef} holds for $\ell=0$
(resp.\ $k=0$). The constant $\Lambda$ is called the radius of convergence since, for each
$(y,w) \in \mathbb{R}^2$, condition \eqref{analyticdef} implies the bound
\[
\sum_{k,\ell \geq 0} \frac{|\partial_x^k \partial_v^\ell f(y,w)|}{k!\ell!}\,|x-y|^k\,|v-w|^\ell
\leq C \sum_{k \geq 0} \left( \frac{|x-y|}{\Lambda} \right)^k
      \sum_{\ell \geq 0} \left( \frac{|v-w|}{\Lambda} \right)^\ell,
\]
which converges whenever $|x-y| < \Lambda$ and $|v-w| < \Lambda$.\par

Now let $\lambda > 0$. Within this analytic functions framework, the first norm that we consider is
the following:
\begin{equation} \label{normlambda}
\| f \|_\lambda = \sum_{k,\ell \geq 0} \frac{\lambda^{k+\ell}}{k!\ell!}\| \partial_x^k \partial_v^\ell f \|_\infty.
\end{equation}
One can readily check that if $f$ is analytic with radius of convergence $\Lambda$, then
$\| f\|_\lambda < \infty$ for every $\lambda < \Lambda$. Moreover, for each $n \in \mathbb{N}_0$ we
consider the $n^{\text{th}}$ order derivative of $\|\!\cdot\!\|_\lambda$ with respect to $\lambda$:
\begin{equation} \label{dnormlambda}
| f |_{\lambda,n} = \frac{\rd^n}{\rd\lambda^n}\| f \|_\lambda
= \sum_{k+\ell \geq n} \frac{(k+\ell)!}{(k+\ell-n)!} \frac{\lambda^{k+\ell-n}}{k!\ell!}\| \partial_x^k \partial_v^\ell f \|_\infty.
\end{equation}
Furthermore, we consider a norm $\|\!\cdot\!\|_{H,\lambda}$ and a seminorm $|\!\cdot\!|_{H, \lambda}$
given by
\begin{equation} \label{normsH}
 \| f \|_{H,\lambda} = \sum_{n\geq 0} \frac{1}{(n!)^2} |f|_{\lambda,n}
 \qquad \text{and} \qquad
 | f |_{H, \lambda}= \sum_{n\geq 1} \frac{n^2}{(n!)^2} |f|_{\lambda,n}.
\end{equation}

The next lemma follows from \cite[Lemma~2.1 and Lemma~2.3(i)]{bossy2013local}.
\begin{lemma} \label{lemma1}
Let $f$ be analytic with radius of convergence $\Lambda$. Then, for each $0 < \lambda < \Lambda$,
\begin{equation} \label{eq:analytic_finite_H}
\| f \|_{H,\lambda} < \infty, \qquad
| f |_{H, \lambda} < \infty,
\end{equation}
and, for $n \in \mathbb{N}_0$,
\begin{equation}\label{eq:dx_shift}
|\partial_x f |_{\lambda, n} \leq |f|_{\lambda, n+1}.
\end{equation}
\end{lemma}

\subsection{Algebra norm estimates}

\begin{lemma}\label{lemma_algebra}
Let $f,g$ be analytic with radius of convergence $\Lambda$, and let $0<\lambda<\Lambda$.
Then, for every $n\in\N_0$,
\begin{equation}\label{eq:algebra_conv}
|fg|_{\lambda,n}\le \sum_{m=0}^n {n\choose m}\,|f|_{\lambda,m}\,|g|_{\lambda,n-m}.
\end{equation}
In particular, for $n=0$,
\begin{equation}\label{eq:algebra}
\|fg\|_\lambda\le \|f\|_\lambda\,\|g\|_\lambda,
\end{equation}
and for $n=1$,
\begin{equation}\label{eq:algebra_dlambda}
|fg|_{\lambda,1}\le |f|_{\lambda,1}\,\|g\|_\lambda+\|f\|_\lambda\,|g|_{\lambda,1}.
\end{equation}
\end{lemma}

\begin{proof}
For $h$ analytic, set
\[
A_{k,\ell}(h)=\frac{1}{k!\,\ell!}\,\|\partial_x^k\partial_v^\ell h\|_\infty\ge 0,
\qquad
\|h\|_\lambda=\sum_{k,\ell\ge 0}A_{k,\ell}(h)\,\lambda^{k+\ell}.
\]
By the Leibniz rule, for all $k,\ell\in\N_0$,
\[
\partial_x^k\partial_v^\ell(fg)=\sum_{i=0}^k\sum_{j=0}^\ell
\binom{k}{i}\binom{\ell}{j}\,\partial_x^i\partial_v^j f\cdot \partial_x^{k-i}\partial_v^{\ell-j} g,
\]
hence, after taking $\|\idot\|_\infty$ and dividing by $k!\,\ell!$,
\[
A_{k,\ell}(fg)\le \sum_{i=0}^k\sum_{j=0}^\ell A_{i,j}(f)\,A_{k-i,\ell-j}(g).
\]

Fix $n\in\N_0$. Differentiating termwise in $\lambda$ gives
\[
|h|_{\lambda,n}=\frac{\rd^n}{\rd\lambda^n}\|h\|_\lambda
=\sum_{k,\ell\ge 0}A_{k,\ell}(h)\,(k+\ell)_n\,\lambda^{k+\ell-n},
\]
where $(p)_n:=p(p-1)\cdots(p-n+1)$ with the convention $(p)_n=0$ if $p<n$. Multiplying the componentwise inequality by
$(k+\ell)_n\,\lambda^{k+\ell-n}$ and summing over $k,\ell\ge 0$, we obtain
\[
|fg|_{\lambda,n}
\le \sum_{k,\ell\ge 0}\sum_{i=0}^k\sum_{j=0}^\ell
A_{i,j}(f)\,A_{k-i,\ell-j}(g)\,(k+\ell)_n\,\lambda^{k+\ell-n}.
\]
Reindex with $a=i$, $b=j$, $c=k-i$, $d=\ell-j$ to get
\[
|fg|_{\lambda,n}
\le \sum_{a,b,c,d\ge 0} A_{a,b}(f)\,A_{c,d}(g)\,\big((a+b)+(c+d)\big)_n\,\lambda^{(a+b)+(c+d)-n}.
\]
Using the identity
\[
(p+q)_n=\sum_{m=0}^n\binom{n}{m}\,(p)_m\,(q)_{n-m},
\]
we infer
\begin{align*}
|fg|_{\lambda,n}
&\le \sum_{m=0}^n \binom{n}{m}
\Big(\sum_{a,b\ge 0}A_{a,b}(f)\,(a+b)_m\,\lambda^{a+b-m}\Big)
\Big(\sum_{c,d\ge 0}A_{c,d}(g)\,(c+d)_{n-m}\,\lambda^{c+d-(n-m)}\Big) \\
&= \sum_{m=0}^n \binom{n}{m}\,|f|_{\lambda,m}\,|g|_{\lambda,n-m},
\end{align*}
which is \eqref{eq:algebra_conv}.
\end{proof}

\begin{lemma}\label{lemma_algebraH}
Let $f,g$ be analytic with radius of convergence $\Lambda$, and let $0 < \lambda < \Lambda$. Then
\begin{equation}\label{eq:algebraH}
\| f g \|_{H,\lambda} \leq \| f \|_{H,\lambda} \| g \|_{H,\lambda},
\end{equation}
and
\begin{equation}\label{eq:algebra_dlambdaH}
| f g |_{H,\lambda} \leq 2| f |_{H,\lambda} \| g \|_{H,\lambda} + 2\| f \|_{H,\lambda} | g |_{H,\lambda}.
\end{equation}
\end{lemma}
\begin{proof}
Using \eqref{eq:algebra_conv} and multiplying by $\frac{1}{(n!)^2}$ gives
\begin{align*}
\frac{1}{(n!)^2}|fg|_{\lambda,n}
& \le \sum_{m=0}^n \frac{1}{(n!)^2}{n\choose m}\,|f|_{\lambda,m}\,|g|_{\lambda,n-m}\\ 
& \leq \sum_{m=0}^n \frac{|f|_{\lambda,m}}{(m!)^2}\,\frac{|g|_{\lambda,n-m}}{((n-m)!)^2},
\end{align*}
since
\[
\frac{1}{(n!)^2}{n\choose m}
=\frac{1}{n!}\,\frac{1}{m!(n-m)!}
\le \frac{1}{(m!)^2}\,\frac{1}{((n-m)!)^2}.
\]
Summing over $n\ge 0$ and reindexing with $r=n-m$ yields
\[
\|fg\|_{H,\lambda}
=\sum_{n\ge 0}\frac{1}{(n!)^2}|fg|_{\lambda,n}
\le \sum_{m\ge 0}\frac{|f|_{\lambda,m}}{(m!)^2}\sum_{r\ge 0}\frac{|g|_{\lambda,r}}{(r!)^2}
=\|f\|_{H,\lambda}\,\|g\|_{H,\lambda},
\]
which is \eqref{eq:algebraH}.
\par
Now, we combine the inequality
\begin{equation}\label{eq:n2_split}
n^2=(m+(n-m))^2\le 2m^2+2(n-m)^2
\qquad\text{for all }0\le m\le n,
\end{equation}
with the same factorial bound as above to obtain
\[
\frac{n^2}{(n!)^2}{n\choose m}
\le \frac{2m^2}{(m!)^2}\frac{1}{((n-m)!)^2}
+\frac{2}{(m!)^2}\frac{(n-m)^2}{((n-m)!)^2}.
\]
Therefore, using \eqref{eq:algebra_conv},
\begin{align*}
|fg|_{H,\lambda}
= & \ \sum_{n\ge 1}\frac{n^2}{(n!)^2}|fg|_{\lambda,n}\\
\leq & \ \sum_{n\ge 1} \sum_{m=0}^n \frac{n^2}{(n!)^2}{n\choose m} |f|_{\lambda,m} |g|_{\lambda,n-m}\\
\le & \ 2\sum_{n\ge 1}\sum_{m=0}^n \frac{m^2}{(m!)^2}|f|_{\lambda,m}\,\frac{1}{((n-m)!)^2}|g|_{\lambda,n-m} \\
&+2\sum_{n\ge 1}\sum_{m=0}^n \frac{1}{(m!)^2}|f|_{\lambda,m}\,\frac{(n-m)^2}{((n-m)!)^2}|g|_{\lambda,n-m}.
\end{align*}
Reindexing each double sum with $r=n-m$ leads to
\begin{align*}
|fg|_{H,\lambda}
&\le 2\sum_{m\ge 1}\frac{m^2}{(m!)^2}|f|_{\lambda,m}\sum_{r\ge 0}\frac{1}{(r!)^2}|g|_{\lambda,r}
+2\sum_{m\ge 0}\frac{1}{(m!)^2}|f|_{\lambda,m}\sum_{r\ge 1}\frac{r^2}{(r!)^2}|g|_{\lambda,r}\\
&=2\,|f|_{H,\lambda}\,\|g\|_{H,\lambda}+2\,\|f\|_{H,\lambda}\,|g|_{H,\lambda},
\end{align*}
which is \eqref{eq:algebra_dlambdaH}.
\end{proof}

\medskip
\subsection{Real-analytic nonlinearities}

In addition to analytic functions of $(x,v)$ in the sense of~\eqref{analyticdef},
we shall use scalar nonlinearities acting by composition.

\begin{definition}\label{def:entire}
A function $\Phi:\R\to\R$ is said to be \emph{real-analytic on $(-R,R)$} if there exists $R>0$ and
real coefficients $(c_n)_{n\ge0}$ such that
\begin{equation}\label{eq:Phi_entire_series}
\Phi(z)=\sum_{n=0}^\infty c_n z^n \qquad \text{for all } |z|<R.
\end{equation}
For $0\le r<R$ we define the associated \emph{majorant functions}
\begin{equation}\label{eq:Phi_majorants}
\tilde{\Phi}(r)=\sum_{n=0}^\infty |c_n|\,r^n,
\qquad
\tilde{\Phi}'(r)=\sum_{n=1}^\infty n\,|c_n|\,r^{n-1}, \qquad
\tilde{\Phi}''(r)=\sum_{n=2}^\infty n(n-1)\,|c_n|\,r^{n-2}.
\end{equation}
\end{definition}

\begin{remark} \label{rem:majorant}
Let $\Phi$ be real-analytic on $(-R,R)$ with expansion $\Phi(z)=\sum_{n\ge0}c_n z^n$ for all $|z|<R$. Then, for every $r\in[0,R)$, the series defining $\tilde\Phi(r)$, $\tilde\Phi'(r)$ and $\tilde\Phi''(r)$ in
\eqref{eq:Phi_majorants} converge, hence these quantities are finite.
Indeed, by the Cauchy--Hadamard formula for the radius of convergence of a power series,
\[
\frac{1}{R}=\limsup_{n\to\infty}|c_n|^{1/n}.
\]
Fix $r\in[0,R)$. Then
\[
\limsup_{n\to\infty}\bigl(|c_n|\,r^n\bigr)^{1/n}
=
r\limsup_{n\to\infty}|c_n|^{1/n}
=
\frac{r}{R}<1,
\]
so the root test implies that $\sum_{n\ge0}|c_n|\,r^n$ converges. Hence $\tilde\Phi(r)<\infty$.
For $\tilde\Phi'$ and $\tilde\Phi''$ we use the same argument, noting that
\[
\lim_{n\to\infty}n^{1/n}=1
\qquad\text{and}\qquad
\lim_{n\to\infty}\bigl(n(n-1)\bigr)^{1/n}=1.
\]
Therefore,
\[
\limsup_{n\to\infty}\bigl(n|c_n|\,r^{n-1}\bigr)^{1/n}
=
\limsup_{n\to\infty}\Bigl(n^{1/n}\,|c_n|^{1/n}\,r^{(n-1)/n}\Bigr)
=
\frac{r}{R}<1,
\]
and similarly
\[
\limsup_{n\to\infty}\bigl(n(n-1)|c_n|\,r^{n-2}\bigr)^{1/n}=\frac{r}{R}<1.
\]
Hence the root test yields convergence of the series in \eqref{eq:Phi_majorants} defining
$\tilde\Phi'(r)$ and $\tilde\Phi''(r)$ as well. 
\end{remark}

\begin{lemma}\label{lemma_composition}
Let $\Phi$ be real-analytic on $(-R,R)$ with expansion $\Phi(z)=\sum_{n\ge0}c_n z^n$.
Let $u$ be analytic with radius of convergence $\Lambda$, and let $0<\lambda<\Lambda$ satisfy $\|u\|_\lambda<R$.
Then the series $\Phi(u)\coloneqq\sum_{n\ge0}c_n u^n$ converges in $\|\cdot\|_\lambda$. Moreover,
\begin{equation}\label{composition_est}
\|\Phi(u)\|_{\lambda} \leq \tilde{\Phi}(\|u\|_{\lambda}),
\end{equation}
and
\begin{equation}\label{composition_diff_est}
|\Phi(u)|_{\lambda, 1} \leq \tilde{\Phi}'(\|u\|_{\lambda}) \, |u|_{\lambda, 1}.
\end{equation}
\end{lemma}

\begin{proof}
By \eqref{eq:algebra}, $\|u^n\|_\lambda\le \|u\|_\lambda^n$ for every $n\in\N$. Hence
\[
\sum_{n=0}^\infty \|c_n u^n\|_\lambda
=\sum_{n=0}^\infty |c_n|\,\|u^n\|_\lambda
\le \sum_{n=0}^\infty |c_n|\,\|u\|_\lambda^n
= \tilde{\Phi}(\|u\|_\lambda)<\infty,
\]
so $\sum_{n\ge 0} c_n u^n$ converges in $\|\!\cdot\!\|_\lambda$ and defines $\Phi(u)$.
Moreover, by the triangle inequality and the previous estimate,
\[
\|\Phi(u)\|_\lambda
\le \sum_{n=0}^\infty \|c_n u^n\|_\lambda
\le \tilde{\Phi}(\|u\|_\lambda),
\]
which proves \eqref{composition_est}.

For \eqref{composition_diff_est}, we first claim that for every $n\ge 1$,
\begin{equation}\label{eq:power_dlambda}
|u^n|_{\lambda,1}\le n\,\|u\|_\lambda^{n-1}\,|u|_{\lambda,1}.
\end{equation}
Indeed, by \eqref{eq:algebra_dlambda} and induction,
\begin{align*}
|u^n|_{\lambda,1}& =|u^{n-1}u|_{\lambda,1}
 \\ 
 & \le |u^{n-1}|_{\lambda,1}\,\|u\|_\lambda+\|u^{n-1}\|_\lambda\,|u|_{\lambda,1} \\
& \le (n-1)\|u\|_\lambda^{n-1}|u|_{\lambda,1}+\|u\|_\lambda^{n-1}|u|_{\lambda,1},
\end{align*}
which yields \eqref{eq:power_dlambda}. Consequently,
\[
|\Phi(u)|_{\lambda,1}
\le \sum_{n=1}^\infty |c_n|\,|u^n|_{\lambda,1}
\le \sum_{n=1}^\infty n\,|c_n|\,\|u\|_\lambda^{n-1}\,|u|_{\lambda,1}
= \tilde{\Phi}'(\|u\|_\lambda)\,|u|_{\lambda,1},
\]
as desired.
\end{proof}

\begin{lemma}\label{lem:Phi_Lipschitz}
Let $\Phi$ be real-analytic on $(-R,R)$ with expansion $\Phi(z)=\sum_{n\ge0}c_n z^n$.
Let $u,v$ be analytic with radius of convergence $\Lambda$, and let $0<\lambda<\Lambda$.
Set $R_0=\max\{\|u\|_\lambda,\|v\|_\lambda\}$ and assume $R_0<R$.
Then
\begin{equation} \label{eq:Phi_Lipschitz}
\|\Phi(u)-\Phi(v)\|_\lambda
\le \tilde{\Phi}'(R_0)\,\|u-v\|_\lambda.
\end{equation}
\end{lemma}
\begin{proof}
For $n\ge 1$ we use the factorization
\[
u^n-v^n=(u-v)\sum_{m=0}^{n-1}u^m v^{n-1-m}.
\]
Using \eqref{eq:algebra} we estimate
\[
\|u^n-v^n\|_\lambda
\le \|u-v\|_\lambda \sum_{m=0}^{n-1}\|u^m v^{n-1-m}\|_\lambda
\le \|u-v\|_\lambda \sum_{m=0}^{n-1}\|u\|_\lambda^m \|v\|_\lambda^{n-1-m}.
\]
With $R_0=\max\{\|u\|_\lambda,\|v\|_\lambda\}$ this gives
\[
\|u^n-v^n\|_\lambda \le n\,R_0^{n-1}\,\|u-v\|_\lambda.
\]
Therefore,
\begin{align*}
\|\Phi(u)-\Phi(v)\|_\lambda
&\le \sum_{n=1}^\infty |c_n|\,\|u^n-v^n\|_\lambda
\le \sum_{n=1}^\infty n\,|c_n|\,R_0^{n-1}\,\|u-v\|_\lambda \\
&= \tilde{\Phi}'(R_0)\,\|u-v\|_\lambda,
\end{align*}
as claimed.
\end{proof}

\begin{lemma}\label{lem:Phi_diff_dlambda}
Let $\Phi$ be real-analytic on $(-R,R)$ with expansion $\Phi(z)=\sum_{n\ge0}c_n z^n$.
Let $u,v$ be analytic with radius of convergence $\Lambda$, and let $0<\lambda<\Lambda$.
Set $R_0=\max\{\|u\|_\lambda,\|v\|_\lambda\}$ and assume $R_0<R$.
Then
\begin{equation}\label{eq:Phi_diff_dlambda}
|\Phi(u)-\Phi(v)|_{\lambda,1}
\le \tilde{\Phi}'(R_0)\,|u-v|_{\lambda,1}
+\tilde{\Phi}''(R_0)\,(\,|u|_{\lambda,1}+|v|_{\lambda,1}\,)\,\|u-v\|_\lambda.
\end{equation}
\end{lemma}

\begin{proof}
Write
\[
\Phi(u)-\Phi(v)=\sum_{n=1}^\infty c_n\,(u^n-v^n),
\qquad
u^n-v^n=(u-v)\sum_{m=0}^{n-1}u^m v^{n-1-m}.
\]
By \eqref{eq:algebra_dlambda} and the triangle inequality,
\begin{equation}\label{eq:Phi_diff_start}
\begin{split}
|&\Phi(u) -\Phi(v)|_{\lambda,1} \\
&\le \sum_{n=1}^\infty |c_n|\,\Big|(u-v)\sum_{m=0}^{n-1}u^m v^{n-1-m}\Big|_{\lambda,1}\\
&\le \sum_{n=1}^\infty |c_n|\Big(
|u-v|_{\lambda,1}\,\Big\|\sum_{m=0}^{n-1}u^m v^{n-1-m}\Big\|_\lambda
+\|u-v\|_\lambda\,\Big|\sum_{m=0}^{n-1}u^m v^{n-1-m}\Big|_{\lambda,1}\Big).
\end{split}
\end{equation}
As in the proof of Lemma~\ref{lem:Phi_Lipschitz},
\[
\Big\|\sum_{m=0}^{n-1}u^m v^{n-1-m}\Big\|_\lambda
\le \sum_{m=0}^{n-1}\|u\|_\lambda^m\|v\|_\lambda^{n-1-m}
\le n\,R_0^{n-1},
\]
where $R_0=\max\{\|u\|_\lambda,\|v\|_\lambda\}$.
Therefore the first term on the right-hand side of \eqref{eq:Phi_diff_start} is bounded by
\[
|u-v|_{\lambda,1}\sum_{n=1}^\infty |c_n|\,n\,R_0^{n-1}
= \tilde{\Phi}'(R_0)\,|u-v|_{\lambda,1}.
\]

For the second term, fix $n\ge 2$. By the triangle inequality and \eqref{eq:algebra_dlambda},
for each $0\le m\le n-1$,
\[
|u^m v^{n-1-m}|_{\lambda,1}
\le |u^m|_{\lambda,1}\,\|v^{n-1-m}\|_\lambda+\|u^m\|_\lambda\,|v^{n-1-m}|_{\lambda,1}.
\]
Using $\|u^m\|_\lambda\le R_0^m$, $\|v^{n-1-m}\|_\lambda\le R_0^{n-1-m}$ and
$|u^m|_{\lambda,1}\le m\,R_0^{m-1}|u|_{\lambda,1}$ (and similarly for $v$), we obtain
\[
|u^m v^{n-1-m}|_{\lambda,1}
\le \big(m\,|u|_{\lambda,1}+(n-1-m)\,|v|_{\lambda,1}\big)\,R_0^{n-2}.
\]
Summing over $m=0,\dots,n-1$ yields
\[
\Big|\sum_{m=0}^{n-1}u^m v^{n-1-m}\Big|_{\lambda,1}
\le n(n-1)\,R_0^{n-2}\big(|u|_{\lambda,1}+|v|_{\lambda,1}\big).
\]
Hence the second term on the right-hand side of \eqref{eq:Phi_diff_start} is bounded by
\[
\|u-v\|_\lambda\sum_{n=2}^\infty |c_n|\,n(n-1)\,R_0^{n-2}\big(|u|_{\lambda,1}+|v|_{\lambda,1}\big)
=\tilde{\Phi}''(R_0)\,(\,|u|_{\lambda,1}+|v|_{\lambda,1}\,)\,\|u-v\|_\lambda.
\]
Combining the two bounds yields the claim.
\end{proof}

\medskip
\subsection{Auxiliary analytic functions}
Consider a weight function $\omega$ given by
\begin{equation} \label{weight}
\omega(v)= \dfrac{1}{\pi(1+v^2)},
\end{equation}
and note that $\int \omega(v) \, \rd v = 1$.
Let $\alpha = \alpha(v)$ be given by
\begin{equation} \label{alpha}
\alpha(v) = \frac{\partial_v \omega(v)}{\omega(v)} = - \frac{2v}{1+v^2}.
\end{equation}
Both functions $\alpha, \omega$ are analytic with radius of convergence $\Lambda = 1$, since by direct computation one has
\begin{equation*}
\|\partial_v^\ell \omega\|_\infty\le \frac{\ell!}{\pi},\qquad
\|\partial_v^\ell\alpha\|_\infty\le 2\ell!\qquad(\ell\ge 0).
\end{equation*}
Hence, by \eqref{eq:analytic_finite_H} we have 
\begin{equation} \label{alpha0}
\alpha_0 \coloneqq \| \alpha \|_{H,\lambda_0} < \infty,
\end{equation}
and
\begin{equation} \label{beta0_beta1}
\beta_0 \coloneqq \| \omega \|_{H,\lambda_0} < \infty, \qquad \beta_1 \coloneqq | \omega |_{H,\lambda_0} < \infty,  
\end{equation}
for $0< \lambda_0 < 1$.

\section{Local well-posedness: perturbations of the trivial solution} \label{section_main}

In this section we establish a local-in-time analytic well-posedness result for \eqref{vlasov0}
by a Banach fixed-point argument in a time-dependent analytic framework.
For technical reasons we work with the weighted unknown
\[
g=\frac{f}{\omega},
\]
where $\omega$ is defined in \eqref{weight} and $\alpha=\partial_v\omega/\omega$ is given by \eqref{alpha}.
If $f$ solves \eqref{vlasov0}, then $g$ formally solves
\begin{equation}\label{eq:g_equation}
\begin{dcases}
\partial_t g + v \cdot \partial_x g - \partial_x\Phi(\rho)\cdot(\partial_v g + \alpha g) = 0, \\
\rho = \displaystyle\int_{\R} \omega\, g \,\rd v, \\
g|_{t=0} = g_0 := f_0/\omega.
\end{dcases}
\end{equation}
Conversely, if $g$ solves \eqref{eq:g_equation}, then $f=\omega g$ solves \eqref{vlasov0}.
\par
Consider positive parameters $\lambda_0$, $K$ and $T$ satisfying:
\begin{equation} \label{parameters}
0 < T < 1, \qquad  T < \lambda_0 < 1, \qquad  0 < K < \frac{\lambda_0}{T} - 1.
\end{equation}
Let $\lambda: [0,T] \to \R$ be given by
\begin{equation} \label{lambda}
\lambda(t) = \lambda_0 - (K+1)t.
\end{equation}
Additionally, for $M > 0$ consider the following set $\mathrm{X}_{\lambda_0, K, T}^M$:
\begin{equation} \label{spaceX}
\mathrm{X}_{\lambda_0, K, T}^M
=\left\lbrace g \in C\big([0,T]; C^\infty(\mathbb{R}^2)\big) : \,
\sup_{t\in[0,T]} \|g(t) \|_{H, \lambda(t)} + \int_0^T |g(t)|_{H, \lambda(t)} \, \rd t  \leq M \right\rbrace,
\end{equation}
endowed with the metric induced by
\begin{equation} \label{normZ}
\| g\|_{\Z} =  \sup_{t\in[0,T]} \|g(t) \|_{ \lambda(t)} + \int_0^T |g(t)|_{\lambda(t),1} \, \rd t.
\end{equation}
\begin{remark}\label{rem:rho_well_defined}
Our analytic framework is based on sup-type analytic norms and does not require $f(t)\in L^1(\R_x\times\R_v)$.
However, for any $g\in \mathrm{X}_{\lambda_0,K,T}^M$ we have, for all $t\in[0,T]$,
\[
\|g(t)\|_\infty \le \|g(t)\|_{\lambda(t)} \le \|g(t)\|_{H,\lambda(t)} \le M,
\]
hence $g$ is bounded on $[0,T]\times\R^2$. Therefore the density
\[
\rho(t,x)=\int_{\R}\omega(v)\,g(t,x,v)\,\rd v
\]
is well-defined for all $(t,x)\in[0,T]\times\R$ since $\omega\in L^1(\R_v)$, and consequently
\[
f=\omega g \in L^\infty\big([0,T]\times\R_x;L^1(\R_v)\big).
\]
No decay as $|x|\to\infty$ is imposed, so the associated mass/energy may be infinite.
\end{remark}
\par
The local-in-time analytic well-posedness result is stated as follows.

\begin{theorem}\label{mainthm}
Let $\Phi$ be real-analytic on $(-R,R)$ in the sense of Definition~\ref{def:entire}.
Fix parameters $\lambda_0,K,T$ satisfying \eqref{parameters}, and let $M\in(0,R)$.
Assume that $M$ satisfies
\begin{align} \label{M_condition}
2(1+\alpha_0)M\Big(\tilde{\Phi}'(M)+2MT\,\tilde{\Phi}''(M)\Big)\,e^{\alpha_0\tilde{\Phi}'(M)MT} < 1
\leq K-\lambda_0-5\,\tilde{\Phi}'(M)M,
\end{align}
and that the initial datum $f_0 \in C^\infty(\mathbb{R}^2)$ is such that
\begin{equation} \label{f_0size}
\| \pi(1+v^2)f_0 \|_{H, \lambda_0}
\leq \frac{M}{2} e^{-(5+\alpha_0)\tilde{\Phi}'(M)M},
\end{equation}
where $\alpha_0$ is as in \eqref{alpha0}.
Then there exists a unique solution $g \in \mathrm{X}_{\lambda_0, K, T}^M$ to \eqref{eq:g_equation}.
In particular, $f:=\omega g$ is a solution of \eqref{vlasov0} on $[0,T]$ with $f|_{t=0}=f_0$, where $\omega$ is as in~\eqref{weight},
and it is unique among solutions $f$ such that $f/\omega \in \mathrm{X}_{\lambda_0,K,T}^M$.
\end{theorem}

\begin{remark}\label{rem:interpret_assumptions}
The conditions in Theorem~\ref{mainthm} ensure two requirements: (i) the analyticity radius
$\lambda(t)=\lambda_0-(K+1)t$ remains positive on $[0,T]$ (so the time-dependent analytic norms are well-defined), and
(ii) the fixed point map $\Psi$ constructed below maps the ball $\mathrm{X}_{\lambda_0,K,T}^M$ into itself and is a contraction.
In particular, the hypothesis \eqref{f_0size} is a smallness condition on the initial datum $g_0 = f_0/\omega$, so the theorem
is a local-in-time small-data result (a perturbation around the trivial steady state~$g \equiv 0$, hence $f\equiv 0$).
\end{remark}

\begin{remark}\label{rem:f_in_X_ball}
Let $g\in \mathrm{X}_{\lambda_0,K,T}^M$ be the unique solution to \eqref{eq:g_equation} given by Theorem~\ref{mainthm} and let $f=\omega g$. Since $\lambda(t)\le \lambda_0$ for $t\in[0,T]$, we have
$\|\omega\|_{H,\lambda(t)}\le \beta_0$ and $|\omega|_{H,\lambda(t)}\le \beta_1$, where $\beta_0,\beta_1$ are as in~\eqref{beta0_beta1}. Using~\eqref{eq:algebraH}--\eqref{eq:algebra_dlambdaH}, we obtain for all $t\in[0,T]$,
\[
\|f(t)\|_{H,\lambda(t)}\le \beta_0\,\|g(t)\|_{H,\lambda(t)},
\qquad
|f(t)|_{H,\lambda(t)}
\le 2\beta_0\,|g(t)|_{H,\lambda(t)}+2\beta_1\,\|g(t)\|_{H,\lambda(t)}.
\]
Consequently,
\[
\sup_{t\in[0,T]}\|f(t)\|_{H,\lambda(t)}
+\int_0^T |f(t)|_{H,\lambda(t)}\,\rd t
\le 2(\beta_0+\beta_1 T)M,
\]
so $f\in \mathrm{X}_{\lambda_0,K,T}^{\widetilde M}$ with $\widetilde M:=2(\beta_0+\beta_1 T)M$.
\end{remark}

\par
We now introduce the fixed point formulation of \eqref{eq:g_equation}. Given a coefficient field $\sigma$, the map $\Psi$ below is defined by solving a linear transport equation with smooth coefficients; for background on such quasilinear/transport systems we refer to \cite{rodest1983systems}.
Define $\Psi:\mathrm{X}_{\lambda_0,K,T}^M\to \mathrm{X}_{\lambda_0,K,T}^M$ by:
\begin{equation}\label{vlasov}
\begin{split}
\text{Given }&h\in \mathrm{X}_{\lambda_0, K, T}^M,\ \text{let }\Psi(h)=g \text{ be the solution of}\\
&
\begin{dcases}
\partial_t g + v \cdot \partial_x g - \partial_x\Phi(\sigma)\cdot(\partial_v g + \alpha g) = 0, \\
\sigma = \int_{\R} \omega \, h \, \rd v, \\
g|_{t=0} = g_0 = f_0 / \omega.
\end{dcases}
\end{split}
\end{equation}
For $h\in \mathrm{X}_{\lambda_0,K,T}^M$ we have $\|\sigma(t)\|_\infty\le \|\sigma(t)\|_{H,\lambda(t)}\le M$,
hence $\sigma(t,x)\in[-M,M]\subseteq(-R,R)$ and $\Phi(\sigma)$ is well-defined.
A fixed point $g$ of $\Psi$ yields a solution of \eqref{eq:g_equation}, leading to a solution $f= \omega g$ of~\eqref{vlasov0}.

\par 
For completeness, we verify that the space $\mathrm{X}_{\lambda_0, K, T}^M$ used in the Banach fixed-point argument is complete. To do so, we show that it is a closed subset of a Banach space $\mathrm{Z}_{\lambda_0, K, T}$ defined as
\begin{equation} \label{spaceZ}
\mathrm{Z}_{\lambda_0, K, T} = \left\lbrace g \in C\big([0,T]; C^\infty(\mathbb{R}^2)\big)  :\,  \|g \|_\Z = \sup_{t\in[0,T]} \|g(t) \|_{\lambda(t)} + \int_0^T |g(t)|_{\lambda(t),1} \, \rd t  < \infty \right\rbrace.
\end{equation} 
In Appendix \ref{section_complete_metric_space} we provide a proof that $\mathrm{Z}_{\lambda_0, K, T}$ is complete. To check that $\mathrm{X}_{\lambda_0, K, T}^M$ is a closed subset of $\mathrm{Z}_{\lambda_0, K, T}$, we take a sequence $(g_m) \subseteq \mathrm{X}_{\lambda_0, K, T}^M$ such that $\| g_m-g \|_\Z \to 0$ as $m \to \infty$ for some $g \in \mathrm{Z}_{\lambda_0, K, T}$. By the definition of $\mathrm{X}_{\lambda_0, K, T}^M $, for every $m \in \mathbb{N}$ we have
\begin{equation} \label{aux1}
\sup_{t\in[0,T]} \|g_m(t) \|_{H, \lambda(t)} + \int_0^T |g_m(t)|_{H, \lambda(t)} \, \rd t  \leq M.
\end{equation}
Moreover, from the assumed convergence we have for all $t \in [0,T]$ and every $k,\ell \geq 0$ that
\[\|\partial_x^k \partial_v^\ell\big(g_m(t) - g(t) \big) \|_\infty \to 0 \quad  \text{as} \ m \to \infty \]
which implies  
\[\|\partial_x^k \partial_v^\ell g_m(t) \|_\infty \to \|\partial_x^k \partial_v^\ell g(t) \|_\infty \quad \text{as} \ m \to \infty.\]
By Fatou's lemma we deduce for each $t \in [0,T]$,
\begin{align*}
\|g(t) \|_{H,\lambda(t)} & = \sum_{n \geq 0} \frac{1}{(n!)^2} \sum_{k + \ell \geq n} \frac{(k+\ell)!}{(k+\ell-n)!} \frac{\lambda(t)^{k+\ell-n}}{k! \ell!} \|\partial_x^k \partial_v^\ell g(t) \|_{\infty} \\ 
& = \sum_{n \geq 0} \frac{1}{(n!)^2} \sum_{k + \ell \geq n} \frac{(k+\ell)!}{(k+\ell-n)!} \frac{\lambda(t)^{k+\ell-n}}{k! \ell!} \lim_{m \to \infty} \|\partial_x^k \partial_v^\ell g_m(t) \|_{\infty} \\
& \leq \liminf_{m \to \infty} \|g_m(t) \|_{H,\lambda(t)},
\end{align*}
and similarly, 
\begin{align}
\int_0^T |g(t)|_{H, \lambda(t)} \, \rd t \leq  \liminf_{m \to \infty} \int_0^T |g_m(t)|_{H, \lambda(t)} \, \rd t,
\end{align}
from which by (\ref{aux1}) we see
\begin{align}
\sup_{t\in[0,T]} \|g(t) \|_{H, \lambda(t)} + \int_0^T |g(t)|_{H, \lambda(t)} \, \rd t \leq M,
\end{align}
and so $g \in \mathrm{X}_{\lambda_0, K, T}^M$, as desired.

\subsection{Preliminary estimates}\label{section_prelimestimates}

\begin{lemma}\label{estimate1prop}
Let $g=g(t,x,v)$ be an analytic solution of \eqref{vlasov}, and set
\[
F(t,x)=- \partial_x\Phi(\sigma(t,x)).
\]
Then for each $k,\ell\in\N_0$,
\begin{equation}\label{estimate1}
\frac{\rd}{\rd t}\,\|\partial_x^k\partial_v^\ell g\|_\infty
\le \ell\,\|\partial_x^{k+1}\partial_v^{\ell-1}g\|_\infty + I_1+I_2,
\end{equation}
where
\begin{equation}\label{I1I2}
\begin{split}
I_1
&= \sum_{i=0}^{k-1} {k\choose i}\,
\|\partial_x^{k-i}F\|_\infty\,\|\partial_x^i\partial_v^{\ell+1}g\|_\infty,\\
I_2
&= \sum_{i=0}^{k} {k\choose i}\,
\|\partial_x^{k-i}F\|_\infty
\sum_{j=0}^{\ell} {\ell\choose j}\,
\|\partial_v^{\ell-j}\alpha\|_\infty\,\|\partial_x^i\partial_v^{j}g\|_\infty.
\end{split}
\end{equation}
\end{lemma}

\begin{proof}
We apply the operator $\partial_x^k \partial_v^\ell$ to (\ref{vlasov}). The first term simply becomes $\partial_t (\partial_x^k \partial_v^\ell g)$, while the second is computed as follows:
\begin{align*}
\partial_x^k \partial_v^\ell (v \cdot \partial_x g) & = \partial_v^\ell (v\cdot \partial_x^{k+1}g) \\ 
& = \sum_{j=0}^\ell {\ell \choose j} \partial_v^j v \cdot \partial_v^{\ell-j} \partial_x^{k+1}g \\
& = v \cdot \partial_x (\partial_x^k \partial_v^\ell g) + \ell \partial_x^{k+1} \partial_v^{\ell-1} g.
\end{align*}
For the force term, note that $F=F(t,x)$ does not depend on $v$, hence
\begin{align*}
\partial_x^k\partial_v^\ell\big(F\cdot\partial_v g\big)
&=\partial_x^k \big(F\cdot\partial_v^{\ell+1}g\big)
= F\cdot\partial_v(\partial_x^k\partial_v^\ell g)
+ \sum_{i=0}^{k-1}{k\choose i}\,\partial_x^{k-i}F\cdot\partial_x^i\partial_v^{\ell+1}g,
\\
\partial_x^k\partial_v^\ell\big(F\cdot\alpha g\big)
&=\sum_{i=0}^k {k\choose i}\,\partial_x^{k-i}F\cdot
\partial_x^i\partial_v^\ell(\alpha g)
\\
&=\sum_{i=0}^k {k\choose i}\,\partial_x^{k-i}F\,
\sum_{j=0}^\ell {\ell\choose j}\,\partial_v^{\ell-j}\alpha\cdot\partial_x^i\partial_v^j g.
\end{align*}
Collecting terms, the quantity $u=\partial_x^k\partial_v^\ell g$ satisfies the following kinetic equation:
\begin{align*}
\partial_t u  + v & \cdot \partial_x u + F\cdot\partial_v u \\
 = \ & - \ell \partial_x^{k+1} \partial_v^{\ell-1} g - \sum_{i=0}^{k-1}{k\choose i}\,\partial_x^{k-i}F\cdot\partial_x^i\partial_v^{\ell+1}g \\ & -\sum_{i=0}^k {k\choose i}\,\partial_x^{k-i}F\,
\sum_{j=0}^\ell {\ell\choose j}\,\partial_v^{\ell-j}\alpha\cdot\partial_x^i\partial_v^j g.
\end{align*}
Applying the maximum principle
\eqref{maximumprinciple} yields \eqref{estimate1}.
\end{proof}

\begin{lemma}\label{estimate2prop}
Let $g=g(t,x,v)$ be an analytic solution of \eqref{vlasov} and 
\[
F(t,x)=-\partial_x\Phi(\sigma(t,x)).
\]
Then for each $\lambda>0$ and $n\in\N_0$,
\begin{equation}\label{estimate2}
\frac{\rd}{\rd t}|g|_{\lambda,n}
\le \lambda\,|g|_{\lambda,n+1}+n\,|g|_{\lambda,n}
+\frac{\rd^n}{\rd\lambda^n}\Big( \|F\|_\lambda\,|g|_{\lambda,1}
+\|F\|_\lambda\,\|\alpha\|_\lambda\,\|g\|_\lambda \Big).
\end{equation}
\end{lemma}

\begin{proof}
We first apply $\dfrac{\rd^n}{\rd \lambda^n}\!\left( \dfrac{\lambda^{k+\ell}}{k!\, \ell!}\right)$
to the inequality \eqref{estimate1} and then sum over $k,\ell \in \mathbb{N}_0$ to obtain
\begin{equation}\label{eq:est2_start}
\frac{\rd}{\rd t} |g|_{\lambda,n} \leq
\dfrac{\rd^n}{\rd \lambda^n} \sum_{\substack{k \geq 0 \\ \ell \geq 1}} \frac{\ell \lambda^{k+\ell}}{k!\, \ell!}
\| \partial_x^{k+1} \partial_v^{\ell-1} g \|_\infty + J_1 + J_2,
\end{equation}
where
\[
J_1 = \sum_{k,\ell \geq 0} \dfrac{\rd^n}{\rd \lambda^n}\!\left(\frac{\lambda^{k+\ell}}{k!\, \ell!}\right) I_1,
\qquad
J_2 = \sum_{k,\ell \geq 0} \dfrac{\rd^n}{\rd \lambda^n}\!\left(\frac{\lambda^{k+\ell}}{k!\, \ell!}\right) I_2,
\]
with $I_1$ and $I_2$ given in \eqref{I1I2}.
\par

We now estimate the three terms on the right-hand side of \eqref{eq:est2_start}. The first term is treated as follows:
\begin{align*}
\dfrac{\rd^n}{\rd \lambda^n} \sum_{\substack{k \geq 0 \\ \ell \geq 1}}
\frac{\ell \lambda^{k+\ell}}{k!\, \ell!} \| \partial_x^{k+1} \partial_v^{\ell-1} g \|_\infty
&= \dfrac{\rd^n}{\rd \lambda^n} \sum_{k,\ell \geq 0}
\frac{\lambda^{k+\ell+1}}{k!\,\ell!} \| \partial_x^{k+1}\partial_v^\ell g \|_\infty \\
&= \dfrac{\rd^n}{\rd \lambda^n}\big( \lambda \|\partial_x g \|_\lambda \big) \\
& = \sum_{m=0}^n {n \choose m} \frac{\rd^m}{\rd \lambda^m} \lambda \cdot \frac{\rd^{n-m}}{\rd \lambda^{n-m}} \| \partial_x g\|_\lambda \\
&= \lambda |\partial_x g|_{\lambda,n} + n |\partial_x g|_{\lambda, n-1} \\
& \leq \lambda |g|_{\lambda, n+1} + n |g|_{\lambda,n},
\end{align*}
where in the last step we used \eqref{eq:dx_shift} (and for the $n=0$ case we interpret
$|\partial_x g|_{\lambda,-1}=0$). 
\par 
Regarding the term $J_1$, we have:
\begin{align*}
\sum_{k,\ell \geq 0}\frac{\lambda^{k+\ell}}{k!\,\ell!}\,I_1
&=
\sum_{k\ge 1}\sum_{\ell\ge 0}\frac{\lambda^{k+\ell}}{k!\,\ell!}
\sum_{i=0}^{k-1}{k\choose i}\,
\|\partial_x^{k-i}F\|_\infty\,\|\partial_x^i\partial_v^{\ell+1}g\|_\infty\\
&=
\sum_{m\ge 1}\frac{\lambda^{m}}{m!}\|\partial_x^{m}F\|_\infty\,
\sum_{i,\ell\ge 0}\frac{\lambda^{i+\ell}}{i!\,\ell!}\|\partial_x^i\partial_v^{\ell+1}g\|_\infty\\
&\le \|F\|_\lambda\,\|\partial_v g\|_\lambda \\
&\le \|F\|_\lambda\, |g|_{\lambda,1},
\end{align*}
where in the last step we used \eqref{eq:dx_shift}. Applying $\frac{\rd^n}{\rd\lambda^n}$ yields
\[
J_1 \le \frac{\rd^n}{\rd\lambda^n}\big(\|F\|_\lambda\,|g|_{\lambda,1}\big).
\]
Similarly,
\begin{align*}
\sum_{k,\ell\ge 0}\frac{\lambda^{k+\ell}}{k!\,\ell!}\,I_2
&=
\sum_{k,\ell\ge 0}\frac{\lambda^{k+\ell}}{k!\,\ell!}
\sum_{i=0}^{k} {k\choose i}\,
\|\partial_x^{k-i}F\|_\infty
\sum_{j=0}^{\ell} {\ell\choose j}\,
\|\partial_v^{\ell-j}\alpha\|_\infty\,\|\partial_x^i\partial_v^j g\|_\infty\\
&=
\sum_{m\ge 0}\frac{\lambda^{m}}{m!}\|\partial_x^{m}F\|_\infty\,
\sum_{p\ge 0}\frac{\lambda^{p}}{p!}\|\partial_v^{p}\alpha\|_\infty\,
\sum_{i,j\ge 0}\frac{\lambda^{i+j}}{i!\,j!}\|\partial_x^i\partial_v^j g\|_\infty\\
&=\|F\|_\lambda\,\|\alpha\|_\lambda\,\|g\|_\lambda.
\end{align*}
Applying $\frac{\rd^n}{\rd\lambda^n}$ yields
\[
J_2 = \frac{\rd^n}{\rd\lambda^n}\big(\|F\|_\lambda\,\|\alpha\|_\lambda\,\|g\|_\lambda\big).
\]
Combining the previous estimates with \eqref{eq:est2_start} proves \eqref{estimate2}.
\end{proof}

\medskip
\subsection{A time-dependent estimate}
~\par
Recall that $\lambda(t)=\lambda_0-(K+1)t$ for $t\in[0,T]$, with $\lambda_0>(1+K)T$.
\begin{lemma}\label{lem:timeestimate}
If $g=g(t,x,v)$ is an analytic solution of \eqref{vlasov} and
\[
F(t,x)=-\partial_x\Phi(\sigma(t,x)),
\]
then
\begin{equation}\label{timeestimate}
\begin{split}
\frac{\rd}{\rd t}\,\|g(t)\|_{H,\lambda(t)} \leq &\ (\lambda_0-K)\,|g(t)|_{H,\lambda(t)} \\
&\ + \sum_{n \geq 0} \frac{1}{(n!)^2}\,
\dfrac{\rd^n}{\rd \lambda^n}\Big( \|F(t)\|_{\lambda}\,|g(t)|_{\lambda,1}\Big)\Big|_{\lambda=\lambda(t)} \\
&\ + \sum_{n \geq 0} \frac{1}{(n!)^2}\,
\dfrac{\rd^n}{\rd \lambda^n}\Big( \|F(t)\|_{\lambda}\,\|\alpha\|_{\lambda}\,\|g(t)\|_{\lambda}\Big)\Big|_{\lambda=\lambda(t)} .
\end{split}
\end{equation}
\end{lemma}

\begin{proof}
By definition \eqref{normsH} and the chain rule,
\begin{align*}
\frac{\rd}{\rd t}\|g(t)\|_{H,\lambda(t)}
&=\frac{\rd}{\rd t}\sum_{n\ge 0}\frac{1}{(n!)^2}\,|g(t)|_{\lambda(t),n}\\
&=\sum_{n\ge 0}\frac{\lambda'(t)}{(n!)^2}\,|g(t)|_{\lambda(t),n+1}
+\sum_{n\ge 0}\frac{1}{(n!)^2}\,\frac{\rd}{\rd t}\Big(|g(t)|_{\lambda,n}\Big)\Big|_{\lambda=\lambda(t)}.
\end{align*}
Applying Lemma~\ref{estimate2prop} with $\lambda=\lambda(t)$ yields
\begin{align*}
\frac{\rd}{\rd t}\|g(t)\|_{H,\lambda(t)} \leq\,
&\sum_{n\ge 0}\frac{\lambda'(t)+\lambda(t)}{(n!)^2}\,|g(t)|_{\lambda(t),n+1}
+\sum_{n\ge 0}\frac{n}{(n!)^2}\,|g(t)|_{\lambda(t),n}\\
&\quad+\sum_{n\ge 0}\frac{1}{(n!)^2}\,
\dfrac{\rd^n}{\rd \lambda^n}\Big( \|F(t)\|_{\lambda}\,|g(t)|_{\lambda,1}\Big)\Big|_{\lambda=\lambda(t)}\\
&\quad+\sum_{n\ge 0}\frac{1}{(n!)^2}\,
\dfrac{\rd^n}{\rd \lambda^n}\Big( \|F(t)\|_{\lambda}\,\|\alpha\|_{\lambda}\,\|g(t)\|_{\lambda}\Big)\Big|_{\lambda=\lambda(t)}.
\end{align*}
We now estimate the first two sums. Since $\lambda'(t)=-(K+1)$, we have \[\lambda'(t)+\lambda(t)+1=\lambda(t)-K.\] Therefore,
\begin{align*}
&\sum_{n\ge 0}\frac{\lambda'(t)+\lambda(t)}{(n!)^2}\,|g(t)|_{\lambda(t),n+1}
+\sum_{n\ge 0}\frac{n}{(n!)^2}\,|g(t)|_{\lambda(t),n} \\
&=(\lambda'(t)+\lambda(t))\sum_{n\ge 1}\frac{n^2}{(n!)^2}\,|g(t)|_{\lambda(t),n}
+\sum_{n\ge 1}\frac{n}{(n!)^2}\,|g(t)|_{\lambda(t),n} \\
&\le (\lambda'(t)+\lambda(t))\,|g(t)|_{H,\lambda(t)} + |g(t)|_{H,\lambda(t)} \\
&= (\lambda(t)-K)\,|g(t)|_{H,\lambda(t)} \\
&\le (\lambda_0-K)\,|g(t)|_{H,\lambda(t)}.
\end{align*}
Combining this with the previous inequality gives \eqref{timeestimate}.
\end{proof}

\medskip
\subsection{Supplementary lemmas}

The first lemma provides an estimate for the second term on the right-hand side of
\eqref{timeestimate}, namely the contribution involving $\|F\|_\lambda\,|g|_{\lambda,1}$. 

\begin{lemma}\label{lem:force_term_Phi}
Let $\Phi$ be real-analytic on $(-R,R)$ with expansion $\Phi(z)=\sum_{n\ge0}c_n z^n$ for $|z|<R$. Let $g=g(x,v)$ and $\sigma=\sigma(x)$ be analytic, and set
\[
F(x)=-\partial_x\Phi(\sigma(x)).
\]
Then, for every $\lambda>0$ such that $\|\sigma\|_{H,\lambda}<R$,
\begin{equation}\label{eq:force_term_Phi}
\sum_{n \ge 0}\frac{1}{(n!)^2}\,
\frac{\rd^n}{\rd\lambda^n}\Big(\|F\|_{\lambda}\,|g|_{\lambda,1}\Big)
\le 5\,\tilde{\Phi}'(\|\sigma\|_{H,\lambda})\,
\Big(\|\sigma\|_{H,\lambda}\,|g|_{H,\lambda}
+|\sigma|_{H,\lambda}\,\|g\|_{H,\lambda}\Big).
\end{equation}
\end{lemma}

\begin{proof}
We use the identity
\[
F=-(\Phi'(\sigma))\,\partial_x\sigma.
\]
By \eqref{eq:algebra_conv} applied to the product $\Phi'(\sigma)\cdot \partial_x\sigma$ and
\eqref{eq:dx_shift}, for every $n\in\N_0$, we have
\begin{equation}\label{eq:F_lambda_coeff}
\begin{split}
|F|_{\lambda,n}=|\Phi'(\sigma)\,\partial_x\sigma|_{\lambda,n}
& \le \sum_{m=0}^n\binom{n}{m}\,|\Phi'(\sigma)|_{\lambda,m}\,|\partial_x\sigma|_{\lambda,n-m} \\
& \le \sum_{m=0}^n\binom{n}{m}\,|\Phi'(\sigma)|_{\lambda,m}\,|\sigma|_{\lambda,n-m+1}.
\end{split}
\end{equation}
We now estimate the left-hand side of \eqref{eq:force_term_Phi}.
By the Leibniz rule,
\[
\frac{\rd^n}{\rd\lambda^n}\Big(\|F\|_{\lambda}\,|g|_{\lambda,1}\Big)
=\sum_{p=0}^n \binom{n}{p}\,|F|_{\lambda,p}\,|g|_{\lambda,n-p+1}.
\]
Using \eqref{eq:F_lambda_coeff} and changing the summation order yields
\begin{align*}
\frac{\rd^n}{\rd\lambda^n}\Big(\|F\|_{\lambda}\,|g|_{\lambda,1}\Big)
&\le \sum_{p=0}^n \binom{n}{p}\sum_{m=0}^p\binom{p}{m}\,|\Phi'(\sigma)|_{\lambda,m}\,|\sigma|_{\lambda,p-m+1}\,|g|_{\lambda,n-p+1}\\
&=\sum_{\substack{m,r,s\ge 0\\ m+r+s=n}}
\binom{n}{m}\binom{n-m}{r}\,|\Phi'(\sigma)|_{\lambda,m}\,|\sigma|_{\lambda,r+1}\,|g|_{\lambda,s+1}.
\end{align*}
Multiplying by $\frac{1}{(n!)^2}$ and summing over $n\ge 0$, we obtain
\begin{equation}\label{eq:force_start}
\begin{split}
\sum_{n\ge 0} &\frac{1}{(n!)^2}\,
\frac{\rd^n}{\rd\lambda^n}\Big(\|F\|_{\lambda}\,|g|_{\lambda,1}\Big) \\
& \le
\sum_{m,r,s\ge 0}\frac{1}{((m+r+s)!)^2}\binom{m+r+s}{m}\binom{r+s}{r}\,
|\Phi'(\sigma)|_{\lambda,m}\,|\sigma|_{\lambda,r+1}\,|g|_{\lambda,s+1}.
\end{split}
\end{equation}
We use the elementary bound
\begin{equation}\label{eq:triple_factorial_bound}
\begin{split}
\frac{1}{((m+r+s)!)^2}\binom{m+r+s}{m}\binom{r+s}{r} &=\frac{1}{(m+r+s)!}\frac{1}{m!\,r!\,s!} \\
&\le \frac{1}{(m!)^2}\frac{1}{((r+s)!)^2}\binom{r+s}{r}.
\end{split}
\end{equation}
Inserting \eqref{eq:triple_factorial_bound} into \eqref{eq:force_start} yields
\begin{equation}\label{eq:pull_out_AH_final}
\begin{split}
\sum_{n\ge 0}&\frac{1}{(n!)^2}\,
\frac{\rd^n}{\rd\lambda^n}\Big(\|F\|_{\lambda}\,|g|_{\lambda,1}\Big)
\\ 
&\le
\Big(\sum_{m\ge 0}\frac{|\Phi'(\sigma)|_{\lambda,m}}{(m!)^2}\Big)
\sum_{r,s\ge 0}\frac{1}{((r+s)!)^2}\binom{r+s}{r}\,|\sigma|_{\lambda,r+1}\,|g|_{\lambda,s+1}.
\end{split}
\end{equation}
The first factor is $\|\Phi'(\sigma)\|_{H,\lambda}$. Since $\Phi'(z)=\sum_{n\ge 1}n c_n z^{n-1}$ and $\|\!\cdot\!\|_{H,\lambda}$ is an algebra norm
(Lemma~\ref{lemma_algebraH}),
\[
\|\Phi'(\sigma)\|_{H,\lambda}
\le \sum_{n\ge 1}n|c_n|\,\|\sigma^{\,n-1}\|_{H,\lambda}
\le \sum_{n\ge 1}n|c_n|\,\|\sigma\|_{H,\lambda}^{\,n-1}
=\tilde{\Phi}'(\|\sigma\|_{H,\lambda}).
\]
Thus,
\begin{equation}\label{eq:Phi_prime_pullout}
\begin{split}
\sum_{n\ge 0}&\frac{1}{(n!)^2}\,
\frac{\rd^n}{\rd\lambda^n}\Big(\|F\|_{\lambda}\,|g|_{\lambda,1}\Big)
\le
\tilde{\Phi}'(\|\sigma\|_{H,\lambda})\, S.
\end{split}
\end{equation}
where
\[
S=\sum_{r,s\ge 0}\frac{1}{((r+s)!)^2}\binom{r+s}{r}\,|\sigma|_{\lambda,r+1}\,|g|_{\lambda,s+1}.
\]
\par 
The remainder of the proof is devoted to estimate $S$, which we rewrite as follows: 
\[
S=\sum_{n,m\ge 0}\frac{|\sigma|_{\lambda,n+1}}{((n+1)!)^2}\,
\frac{|g|_{\lambda,m+1}}{((m+1)!)^2}\,A(n,m),
\]
where
\[
A(n,m)=\binom{n+m}{n}\,\frac{((n+1)!)^2((m+1)!)^2}{((n+m)!)^2}.
\]
We start by the contributions of $n=0,1$. We compute
\[
A(0,m)=(m+1)^2,\qquad
A(1,m)=\frac{4(m+1)^2}{m+2}\le 2(m+1)^2.
\]
Hence
\begin{align*}
\sum_{m\ge 0}\frac{|\sigma|_{\lambda,1}}{(1!)^2}\frac{|g|_{\lambda,m+1}}{((m+1)!)^2}A(0,m)
&=|\sigma|_{\lambda,1}\sum_{m\ge 0}\frac{(m+1)^2}{((m+1)!)^2}|g|_{\lambda,m+1} \\
& =|\sigma|_{\lambda,1}\,|g|_{H,\lambda} \\
& \le \|\sigma\|_{H,\lambda}\,|g|_{H,\lambda},
\end{align*}
and
\begin{align*}
\sum_{m\ge 0}\frac{|\sigma|_{\lambda,2}}{(2!)^2}\frac{|g|_{\lambda,m+1}}{((m+1)!)^2}A(1,m)
&\le \frac{|\sigma|_{\lambda,2}}{4}\,2\sum_{m\ge 0}\frac{(m+1)^2}{((m+1)!)^2}|g|_{\lambda,m+1} \\
& =2\frac{|\sigma|_{\lambda,2}}{4}\,|g|_{H,\lambda}\\
&\le 2\,\|\sigma\|_{H,\lambda}\,|g|_{H,\lambda},
\end{align*}
Therefore, the total $n=0,1$ contribution is bounded by
\[
3\,\|\sigma\|_{H,\lambda}\,|g|_{H,\lambda},
\]
and, by symmetry, the total $m=0,1$ contribution is bounded by
\[
3\,|\sigma|_{H,\lambda}\,\|g\|_{H,\lambda}.
\]
Therefore,
\begin{equation} \label{eq:S_aux1}
S \leq 3 \, (\|\sigma\|_{H,\lambda}\,|g|_{H,\lambda} + |\sigma|_{H,\lambda}\,\|g\|_{H,\lambda}) + \sum_{n,m\ge 2}\frac{|\sigma|_{\lambda,n+1}}{((n+1)!)^2}\,
\frac{|g|_{\lambda,m+1}}{((m+1)!)^2}\,A(n,m). 
\end{equation}
Now, we claim that
\begin{equation}\label{eq:A_tail_bound}
A(n,m)\le 18\qquad \text{for all} \ m,n\ge 2.
\end{equation}
Indeed, if $n=2$ then
\[
A(2,m)=18\,\frac{m+1}{m+2}<18,
\]
and similarly $A(n,2)<18$. Moreover, for fixed $m\ge 3$ we have
\[
\frac{A(n+1,m)}{A(n,m)}
=\frac{(n+2)^2}{(n+1)(n+m+1)}
\le 1
\qquad (n\ge 2,\ m\ge 3),
\]
since $(n+1)(n+m+1)-(n+2)^2=n(m-2)+(m-3)\ge 0$.
Thus for $m\ge 3$ the map $n\mapsto A(n,m)$ is nonincreasing on $\{2,3,\dots\}$, hence
$A(n,m)\le A(2,m)<18$, and by symmetry we obtain \eqref{eq:A_tail_bound}.

Using \eqref{eq:A_tail_bound},
\[
\sum_{m,n\ge 2}\frac{|\sigma|_{\lambda,n+1}}{((n+1)!)^2}\,
\frac{|g|_{\lambda,m+1}}{((m+1)!)^2}\,A(n,m)
\le 18\,B_\sigma\,B_g,
\]
where
\[
B_\sigma=\sum_{n\ge 2}\frac{|\sigma|_{\lambda,n+1}}{((n+1)!)^2},
\qquad
B_g=\sum_{m\ge 2}\frac{|g|_{\lambda,m+1}}{((m+1)!)^2}.
\]
Since $n+1\ge 3$ on $B_\sigma$, we have $(n+1)^2\ge 9$ and therefore
\[
B_\sigma\le \frac{1}{9}\sum_{n\ge 2}\frac{(n+1)^2}{((n+1)!)^2}|\sigma|_{\lambda,n+1}
\le \frac{1}{9}\,|\sigma|_{H,\lambda},
\]
and similarly
\[
B_g\le \sum_{m\ge 2}\frac{|g|_{\lambda,m+1}}{((m+1)!)^2}\le \|g\|_{H,\lambda}.
\]
Hence
\[
18\,B_\sigma B_g \le 2\,|\sigma|_{H,\lambda}\,\|g\|_{H,\lambda}.
\]
Interchanging the roles (using $B_g\le \frac{1}{9}|g|_{H,\lambda}$ and $B_\sigma\le \|\sigma\|_{H,\lambda}$) also gives
\[
18\,B_\sigma B_g \le 2\,\|\sigma\|_{H,\lambda}\,|g|_{H,\lambda}.
\]
Therefore the contribution $n,m\geq2$ is bounded by
\[
2\Big(\|\sigma\|_{H,\lambda}\,|g|_{H,\lambda}+|\sigma|_{H,\lambda}\,\|g\|_{H,\lambda}\Big),
\]
which combined with \eqref{eq:S_aux1} yields
\[
S\le 5\Big(\|\sigma\|_{H,\lambda}\,|g|_{H,\lambda}+|\sigma|_{H,\lambda}\,\|g\|_{H,\lambda}\Big),
\]
and finishes the proof.
\end{proof}

Compared with the analogous estimate in \cite[Lemma~2.11]{bossy2013local}, our argument yields a smaller numerical constant (5 instead of 16).\par
The next lemma provides an estimate for the third term on the right-hand side of \eqref{timeestimate}.

\begin{lemma}\label{lemmaII_new}
Let $g$, $F$ and $\alpha$ be analytic functions. Then
\begin{equation}\label{lemmaII_new_eq}
\sum_{n \geq 0} \frac{1}{(n!)^2}\,
\dfrac{\rd^n}{\rd \lambda^n}\Big( \|F\|_{\lambda}\,\|\alpha\|_{\lambda}\,\|g\|_{\lambda} \Big)
\leq \|F\|_{H,\lambda}\,\|\alpha\|_{H,\lambda}\,\|g\|_{H,\lambda}.
\end{equation}
\end{lemma}

\begin{proof}
By Leibniz rule,
\[
\dfrac{\rd^n}{\rd \lambda^n}\Big( \|F\|_{\lambda}\,\|\alpha\|_{\lambda}\,\|g\|_{\lambda} \Big)
=\sum_{\substack{m_1+m_2+m_3=n\\ m_i\ge 0}}
{n\choose m_1,m_2,m_3}\,|F|_{\lambda,m_1}\,|\alpha|_{\lambda,m_2}\,|g|_{\lambda,m_3}.
\]
Therefore,
\begin{align*}
\sum_{n \geq 0} & \frac{1}{(n!)^2}\,
\dfrac{\rd^n}{\rd \lambda^n}\Big( \|F\|_{\lambda}\,\|\alpha\|_{\lambda}\,\|g\|_{\lambda} \Big) \\
&=
\sum_{n \ge 0}\frac{1}{(n!)^2}
\sum_{\substack{m_1+m_2+m_3=n\\ m_i\ge 0}}
\frac{n!}{m_1!\,m_2!\,m_3!}\,
|F|_{\lambda,m_1}\,|\alpha|_{\lambda,m_2}\,|g|_{\lambda,m_3}\\
&=
\sum_{m_1,m_2,m_3\ge 0}
\frac{1}{(m_1+m_2+m_3)!}\,
\frac{1}{m_1!\,m_2!\,m_3!}\,
|F|_{\lambda,m_1}\,|\alpha|_{\lambda,m_2}\,|g|_{\lambda,m_3}.
\end{align*}
Since $(m_1+m_2+m_3)!\ge m_1!\,m_2!\,m_3!$, we have
\[
\frac{1}{(m_1+m_2+m_3)!}\,\frac{1}{m_1!\,m_2!\,m_3!}
\le \frac{1}{(m_1!)^2}\,\frac{1}{(m_2!)^2}\,\frac{1}{(m_3!)^2}.
\]
Hence
\begin{align*}
\sum_{n \geq 0} &\frac{1}{(n!)^2}\,
\dfrac{\rd^n}{\rd \lambda^n}\Big( \|F\|_{\lambda}\,\|\alpha\|_{\lambda}\,\|g\|_{\lambda} \Big) \\
&\le
\sum_{m_1,m_2,m_3\ge 0}
\frac{|F|_{\lambda,m_1}}{(m_1!)^2}\,
\frac{|\alpha|_{\lambda,m_2}}{(m_2!)^2}\,
\frac{|g|_{\lambda,m_3}}{(m_3!)^2}\\
&=
\sum_{m_1\ge 0}\frac{|F|_{\lambda,m_1}}{(m_1!)^2}
\sum_{m_2\ge 0}\frac{|\alpha|_{\lambda,m_2}}{(m_2!)^2}
\sum_{m_3\ge 0}\frac{|g|_{\lambda,m_3}}{(m_3!)^2}\\
&=\|F\|_{H,\lambda}\,\|\alpha\|_{H,\lambda}\,\|g\|_{H,\lambda},
\end{align*}
which proves \eqref{lemmaII_new_eq}.
\end{proof}

\medskip
\subsection{Contraction mapping}
~\par
Given $h \in \X_{\lambda_0, K, T}^M$, we prove that under some restrictions on the positive parameters $\lambda_0, K,T, M$ and on the initial data, the function $g = \Psi(h)$ belongs to $\X_{\lambda_0, K, T}^M$  and $\Psi$ is a contraction for the metric induced by the norm $\| \idot \|_\Z$. This establishes the existence and uniqueness result stated in Theorem \ref{mainthm}.

\begin{lemma}\label{lem:F_bounds}
Let $\Phi$ be real-analytic on $(-R,R)$ with expansion $\Phi(z)=\sum_{n\ge0}c_n z^n$ for $|z|<R$.
Let $\sigma=\sigma(x)$ be analytic and set $F=-\partial_x\Phi(\sigma)$.
Then, for every $\lambda>0$ such that $\|\sigma\|_{\lambda}<R$,
\begin{equation}\label{eq:F_lambda_bound}
\|F\|_{\lambda}\le \tilde{\Phi}'(\|\sigma\|_{\lambda})\,|\sigma|_{\lambda,1}.
\end{equation}
Moreover, for every $\lambda>0$ such that $\|\sigma\|_{H,\lambda}<R$,
\begin{equation}\label{eq:F_H_bound}
\|F\|_{H,\lambda}\le \tilde{\Phi}'(\|\sigma\|_{H,\lambda})\,|\sigma|_{H,\lambda}.
\end{equation}
\end{lemma}

\begin{proof}
We use the identity
\[
F=-\partial_x\Phi(\sigma)=-(\Phi'(\sigma))\,\partial_x\sigma.
\]
Then, by \eqref{eq:dx_shift} and \eqref{eq:algebra},
\[
\|F\|_{\lambda}\le \|\Phi'(\sigma)\|_{\lambda}\,\|\partial_x\sigma\|_{\lambda}
\le\|\Phi'(\sigma)\|_{\lambda}\,|\sigma|_{\lambda,1}.
\]
Applying~\eqref{composition_est} to $\Phi'$ gives
$\|\Phi'(\sigma)\|_{\lambda}\le \tilde{\Phi}'(\|\sigma\|_{\lambda})$, hence~\eqref{eq:F_lambda_bound}.
\par
Regarding inequality~\eqref{eq:F_H_bound}, we first note that $\|\idot\|_{H,\lambda}$ is an algebra norm: for analytic $u,v$,
\begin{equation}\label{eq:H_algebra}
\|uv\|_{H,\lambda}\le \|u\|_{H,\lambda}\,\|v\|_{H,\lambda}.
\end{equation}
Indeed, for each $n\ge 0$,
\[
|uv|_{\lambda,n}\le\sum_{m=0}^n{n\choose m}|u|_{\lambda,m}|v|_{\lambda,n-m},
\]
and summing with weights $\frac{1}{(n!)^2}$ leads to \eqref{eq:H_algebra}: using \[\frac{1}{(n!)^2}\binom{n}{m}
=\frac{1}{n!}\frac{1}{m!(n-m)!}\le \frac{1}{(m!)^2}\frac{1}{((n-m)!)^2},\] we obtain
\[
\sum_{n\ge 0}\frac{1}{(n!)^2}|uv|_{\lambda,n}
\le \sum_{m\ge 0}\frac{|u|_{\lambda,m}}{(m!)^2}
     \sum_{r\ge 0}\frac{|v|_{\lambda,r}}{(r!)^2}
=\|u\|_{H,\lambda}\|v\|_{H,\lambda}.
\]
Hence,
\[
\|F\|_{H,\lambda}
\le \|\Phi'(\sigma)\|_{H,\lambda}\,\|\partial_x\sigma\|_{H,\lambda}.
\]
By \eqref{eq:dx_shift} applied inside the definition of $\|\idot\|_{H,\lambda}$,
\[
\|\partial_x\sigma\|_{H,\lambda}
=\sum_{n\ge 0}\frac{1}{(n!)^2}\,|\partial_x\sigma|_{\lambda,n}
\le \sum_{n\ge 0}\frac{1}{(n!)^2}\,|\sigma|_{\lambda,n+1}
=\sum_{m\ge 1}\frac{m^2}{(m!)^2}\,|\sigma|_{\lambda,m}
=|\sigma|_{H,\lambda}.
\]
Next, since $\Phi'(z)=\sum_{n\ge 1}n c_n z^{n-1}$ and $\|\idot\|_{H,\lambda}$ is an algebra norm,
\[
\|\Phi'(\sigma)\|_{H,\lambda}
\le \sum_{n\ge 1}n|c_n|\,\|\sigma^{\,n-1}\|_{H,\lambda}
\le \sum_{n\ge 1}n|c_n|\,\|\sigma\|_{H,\lambda}^{n-1}
=\tilde{\Phi}'(\|\sigma\|_{H,\lambda}).
\]
Combining the previous inequalities yields \eqref{eq:F_H_bound}.
\end{proof}

We are now in conditions to establish the first main block towards the proof of Theorem~\ref{mainthm}. The next result yields that $\Psi$ is well defined on $\mathrm{X}_{\lambda_0,K,T}^M$.

\begin{proposition}\label{psiwelldefinedprop}
Let $\Phi$ be real-analytic on $(-R,R)$ for some $R>M>0$. Let $\lambda_0>(K+1)T$ and suppose that
\begin{equation}\label{eq:psi_invariance_cond}
K-\lambda_0-5\,\tilde{\Phi}'(M)M\geq1
\quad \text{and} \quad
\|g_0\|_{H,\lambda_0}\le \frac{M}{2} e^{-(5+\alpha_0)\tilde{\Phi}'(M)M},
\end{equation}
where $\alpha_0=\|\alpha\|_{H,\lambda_0}$.
Then
\begin{equation}\label{psiwelldefined}
\Psi(\mathrm{X}_{\lambda_0,K,T}^M)\subseteq \mathrm{X}_{\lambda_0,K,T}^M.
\end{equation}
\end{proposition}

\begin{proof}
Let $h\in \mathrm{X}_{\lambda_0,K,T}^M$ and  $\sigma=\int_{\R}\omega\,h\,\rd v$.
By the definition of $\mathrm{X}_{\lambda_0,K,T}^M$ and since $\int_{\R}\omega\,\rd v=1$, we have
\[
\sup_{t\in[0,T]}\|\sigma(t)\|_{H,\lambda(t)}+\int_0^T |\sigma(t)|_{H,\lambda(t)}\,\rd t\le M.
\]
Let $g=\Psi(h)$ solve \eqref{vlasov} and let $F(t,x)=-\partial_x\Phi(\sigma(t,x))$.

Applying Lemma~\ref{lem:timeestimate} and Lemmas~\ref{lem:force_term_Phi}--\ref{lemmaII_new}, we obtain for each $t\in[0,T]$,
\begin{align*}
\frac{\rd}{\rd t}\|g(t)\|_{H,\lambda(t)}
\le\ &(\lambda(t)-K)\,|g(t)|_{H,\lambda(t)} \\
&+5\,\tilde{\Phi}'(\|\sigma(t)\|_{H,\lambda(t)})\,
\Big(\|\sigma(t)\|_{H,\lambda(t)}\,|g(t)|_{H,\lambda(t)}
+|\sigma(t)|_{H,\lambda(t)}\,\|g(t)\|_{H,\lambda(t)}\Big)\\
&+\|F(t)\|_{H,\lambda(t)}\,\|\alpha\|_{H,\lambda(t)}\,\|g(t)\|_{H,\lambda(t)}.
\end{align*}
Since $\lambda(t)\le\lambda_0$, we have $\|\alpha\|_{H,\lambda(t)}\le \|\alpha\|_{H,\lambda_0}=\alpha_0$.
Moreover, $\|\sigma(t)\|_{H,\lambda(t)}\le M$, and $\tilde{\Phi}'$ is nondecreasing,
hence $\tilde{\Phi}'(\|\sigma(t)\|_{H,\lambda(t)})\le \tilde{\Phi}'(M)$. Using also Lemma~\ref{lem:F_bounds},
\[
\|F(t)\|_{H,\lambda(t)}\le \tilde{\Phi}'(\|\sigma(t)\|_{H,\lambda(t)})\,|\sigma(t)|_{H,\lambda(t)}
\le \tilde{\Phi}'(M)\,|\sigma(t)|_{H,\lambda(t)}.
\]
Therefore,
\begin{equation}\label{eq:psi_timeineq_new}
\frac{\rd}{\rd t}\|g(t)\|_{H,\lambda(t)}
\le -A\,|g(t)|_{H,\lambda(t)} + B(t)\,\|g(t)\|_{H,\lambda(t)},
\end{equation}
where
\[
A=K-\lambda_0-5\,\tilde{\Phi}'(M)\,M,
\qquad
B(t)=(5+\alpha_0)\,\tilde{\Phi}'(M)\,|\sigma(t)|_{H,\lambda(t)}.
\]
Note that $A\geq1$ by \eqref{eq:psi_invariance_cond}, and that
\[
\int_0^T B(t)\,\rd t \le (5+\alpha_0)\,\tilde{\Phi}'(M)\int_0^T |\sigma(t)|_{H,\lambda(t)}\,\rd t
\le (5+\alpha_0)\,\tilde{\Phi}'(M)\,M.
\]
Let $E(t)$ be the integrating factor
\[
E(t)=\exp\Big(-\int_0^t B(\tau)\,\rd\tau\Big)\in(0,1].
\]
Multiplying \eqref{eq:psi_timeineq_new} by $E(t)$ gives
\[
\frac{\rd}{\rd t}\big(E(t)\,\|g(t)\|_{H,\lambda(t)}\big)\le -A\,E(t)\,|g(t)|_{H,\lambda(t)},
\]
which upon integration from $0$ to $t$ leads to
\[
E(t)\,\|g(t)\|_{H,\lambda(t)} + A\int_0^t E(\tau)\,|g(\tau)|_{H,\lambda(\tau)}\,\rd\tau
\le \|g_0\|_{H,\lambda_0} \quad (t \in [0,T]).
\]
Since $E(t)\ge E(T)=\exp\!\big(-\int_0^T B\big)$, we obtain for all $t\in[0,T]$,
\[
\|g(t)\|_{H,\lambda(t)}
\le \frac{1}{E(T)}\,\|g_0\|_{H,\lambda_0}
= \exp\!\Big(\int_0^T B\Big)\,\|g_0\|_{H,\lambda_0}
\le e^{(5+\alpha_0)\tilde{\Phi}'(M)M}\,\|g_0\|_{H,\lambda_0}.
\]
Moreover, since $A\geq 1$, we may drop $A$ to obtain
\[
\int_0^T E(t)\,|g(t)|_{H,\lambda(t)}\,\rd t \le \|g_0\|_{H,\lambda_0}.
\]
Using again $E(t)\ge E(T)$, it follows that
\[
\int_0^T |g(\tau)|_{H,\lambda(\tau)}\,\rd\tau
\le \frac{1}{E(T)}\,\|g_0\|_{H,\lambda_0}
\le e^{(5+\alpha_0)\tilde{\Phi}'(M)M}\,\|g_0\|_{H,\lambda_0}.
\]
Combining the two bounds yields
\[
\sup_{t\in[0,T]}\|g(t)\|_{H,\lambda(t)}
+\int_0^T |g(\tau)|_{H,\lambda(\tau)}\,\rd\tau
\le 2\,e^{(5+\alpha_0)\tilde{\Phi}'(M)M}\,\|g_0\|_{H,\lambda_0}.
\]
Finally, the smallness condition on $\|g_0\|_{H,\lambda_0}$ in \eqref{eq:psi_invariance_cond}
implies that the right-hand side is controlled by $M$, hence $g\in \mathrm{X}_{\lambda_0,K,T}^M$ and \eqref{psiwelldefined} follows.
\end{proof}

At last, we prove that under the hypothesis of Theorem~\ref{mainthm}, $\Psi$ is a contraction. 

\begin{proposition}\label{prop:psi_contraction}
In addition to the hypotheses of Proposition~\ref{psiwelldefinedprop}, assume that
\begin{equation}\label{eq:psi_contraction_cond}
\kappa = 2(1+\alpha_0)\,M\,\Big(\tilde{\Phi}'(M)+2MT\,\tilde{\Phi}''(M)\Big)\,e^{\alpha_0\tilde{\Phi}'(M)MT} < 1.
\end{equation}
Then $\Psi:\mathrm{X}_{\lambda_0,K,T}^M\to \mathrm{X}_{\lambda_0,K,T}^M$ is a contraction for the metric
induced by the norm $\|\idot \|_{\Z}$.
\end{proposition}

\begin{proof}
Let $h,\bar h\in \mathrm{X}_{\lambda_0,K,T}^M$, $\sigma=\int_{\R}\omega\,h\,\rd v$, $\bar\sigma=\int_{\R}\omega\,\bar h\,\rd v,$ and
\[
g=\Psi(h), \qquad \bar g=\Psi(\bar h), \qquad F=-\partial_x\Phi(\sigma),\qquad \bar F=-\partial_x\Phi(\bar\sigma).
\]
Subtracting the two linear problems \eqref{vlasov} gives
\begin{equation}\label{eq:d_eq}
\partial_t (g - \bar g) + v\cdot \partial_x (g - \bar g) + F\cdot\big(\partial_v (g - \bar g) + \alpha (g - \bar g)\big)
= S,
\end{equation}
where
\[
S=(\bar F-F)\cdot(\partial_v \bar g+\alpha\,\bar g).
\]

\smallskip
\noindent\textbf{First step.}
Arguing as in Lemma~\ref{estimate2prop} with $n=0$, we obtain
for every fixed $\lambda>0$:
\begin{align*}
\frac{\rd}{\rd t}\|g(t) - \bar g(t)\|_{\lambda}
\le & \ \lambda\,|g(t) - \bar g(t)|_{\lambda,1}+\|F(t)\|_{\lambda}\,|g(t) - \bar g(t)|_{\lambda,1} \\
 & +  \|F(t)\|_{\lambda}\,\|\alpha\|_{\lambda}\,\|g(t) - \bar g(t)\|_{\lambda}+\|S(t)\|_{\lambda}.
\end{align*}
Using $\lambda=\lambda(t)$ and the chain rule,
\[
\frac{\rd}{\rd t}\|g(t) - \bar g(t)\|_{\lambda(t)}
=\lambda'(t)\,|g(t) - \bar g(t)|_{\lambda(t),1}+\frac{\rd}{\rd t}\|g(t) - \bar g(t)\|_{\lambda}\Big|_{\lambda=\lambda(t)}.
\]
Since $\lambda'(t)=-(K+1)$ and $\lambda(t)\le \lambda_0$, we have
\[
\lambda'(t)+\lambda(t)=\lambda(t)-(K+1)\le \lambda_0-K-1.
\]
Hence
\begin{equation}\label{eq:d_basic_ineq}
\begin{split}
\frac{\rd}{\rd t}\|g(t) - \bar g(t)\|_{\lambda(t)}
\le & \ (\lambda_0-K-1+\|F(t)\|_{\lambda(t)}) \,|g(t) - \bar g(t)|_{\lambda(t),1} \\
 & +  \|F(t)\|_{\lambda(t)}\,\|\alpha\|_{\lambda(t)}\,\|g(t) - \bar g(t)\|_{\lambda(t)}+\|S(t)\|_{\lambda(t)}.
 \end{split}
\end{equation}

\smallskip
\noindent\textbf{Second step.}
Since $h\in \mathrm{X}_{\lambda_0,K,T}^M$, we have
\[
\|\sigma(t)\|_{\lambda(t)}\le \|\sigma(t)\|_{H,\lambda(t)}\le M,
\qquad
|\sigma(t)|_{\lambda(t),1}\le \|\sigma(t)\|_{H,\lambda(t)}\le M,
\]
and hence, by \eqref{eq:F_lambda_bound},
\begin{equation}\label{eq:F_unif_bound}
\|F(t)\|_{\lambda(t)}
\le \tilde{\Phi}'(\|\sigma(t)\|_{\lambda(t)})\,|\sigma(t)|_{\lambda(t),1}
\le \tilde{\Phi}'(M)M.
\end{equation}
Also, since $\lambda(t)\le \lambda_0$ and $\|\idot\|_{H,\lambda}$ dominates $\|\idot\|_{\lambda}$,
\begin{equation}\label{eq:alpha_unif_bound}
\|\alpha\|_{\lambda(t)}\le \|\alpha\|_{H,\lambda(t)}\le \|\alpha\|_{H,\lambda_0}=\alpha_0.
\end{equation}
By hypothesis, $K-\lambda_0-5\tilde{\Phi}'(M)M\ge 1$, and so
$K-\lambda_0-\tilde{\Phi}'(M)M\ge 1$, hence
\[
\lambda_0-K-1+\|F(t)\|_{\lambda(t)}\le \lambda_0-K-1+\tilde{\Phi}'(M)M\le -1.
\]
Thus \eqref{eq:d_basic_ineq}--\eqref{eq:alpha_unif_bound} yield
\begin{equation}\label{eq:d_gronwall_ready}
\frac{\rd}{\rd t}\|g(t) - \bar g(t)\|_{\lambda(t)} + |g(t) - \bar g(t)|_{\lambda(t),1}
\le \alpha_0\tilde{\Phi}'(M)M \,\|g(t) - \bar g(t)\|_{\lambda(t)}+\|S(t)\|_{\lambda(t)}.
\end{equation}

\smallskip
\noindent\textbf{Third step.}
By the algebra property of $\|\idot\|_{\lambda}$ and \eqref{eq:dx_shift}, we deduce
\[
\|S(t)\|_{\lambda(t)}
\le \|\bar F(t)-F(t)\|_{\lambda(t)}\,
\big(|\bar g(t)|_{\lambda(t),1}+\|\alpha\|_{\lambda(t)}\,\|\bar g(t)\|_{\lambda(t)}\big).
\]
Since $\bar g\in \mathrm{X}_{\lambda_0,K,T}^M$ we have
\[
|\bar g(t)|_{\lambda(t),1}\le \|\bar g(t)\|_{H,\lambda(t)}\le M,
\qquad
\|\bar g(t)\|_{\lambda(t)}\le \|\bar g(t)\|_{H,\lambda(t)}\le M.
\]
Together with \eqref{eq:alpha_unif_bound}, this implies
\begin{equation}\label{eq:S_bound_by_Fdiff}
\|S(t)\|_{\lambda(t)}\le (1+\alpha_0)\,M\,\|\bar F(t)-F(t)\|_{\lambda(t)}.
\end{equation}
Next, since $\bar F-F=-\partial_x(\Phi(\bar\sigma)-\Phi(\sigma))$, \eqref{eq:dx_shift} gives
\[
\|\bar F(t)-F(t)\|_{\lambda(t)}
\le |\Phi(\bar\sigma(t))-\Phi(\sigma(t))|_{\lambda(t),1}.
\]
Set $R_0(t)=\max\{\|\sigma(t)\|_{\lambda(t)},\|\bar\sigma(t)\|_{\lambda(t)}\}\le M$.
Applying Lemma~\ref{lem:Phi_diff_dlambda} yields
\[
|\Phi(\bar\sigma(t))-\Phi(\sigma(t))|_{\lambda(t),1}
\le \tilde{\Phi}'(M)\,|\bar\sigma(t)-\sigma(t)|_{\lambda(t),1}
+2M\,\tilde{\Phi}''(M)\,\|\bar\sigma(t)-\sigma(t)\|_{\lambda(t)}.
\]
Since $\sigma-\bar\sigma$ is an $\omega$-average,
\[
\|\sigma(t)-\bar\sigma(t)\|_{\lambda(t)}\le \|h(t)-\bar h(t)\|_{\lambda(t)},
\qquad
|\sigma(t)-\bar\sigma(t)|_{\lambda(t),1}\le |h(t)-\bar h(t)|_{\lambda(t),1}.
\]
Combining these estimates with \eqref{eq:S_bound_by_Fdiff} gives, for all $t\in[0,T]$,
\[
\|S(t)\|_{\lambda(t)}
\le (1+\alpha_0)\,M\,\Big(\tilde{\Phi}'(M)\,|h(t)-\bar h(t)|_{\lambda(t),1}
+2M\,\tilde{\Phi}''(M)\,\|h(t)-\bar h(t)\|_{\lambda(t)}\Big).
\]
Integrating in time yields
\begin{equation}\label{eq:intS_bound}
\int_0^T \|S(t)\|_{\lambda(t)}\,\rd t
\le (1+\alpha_0)\,M\,\Big(\tilde{\Phi}'(M)+2MT\,\tilde{\Phi}''(M)\Big)\,\|h-\bar h\|_{\Z}.
\end{equation}

\smallskip
\noindent\textbf{Fourth step.}
Set $a=\alpha_0\tilde{\Phi}'(M)M$ and
\[
y(t)=\|g(t) - \bar g(t)\|_{\lambda(t)},\quad z(t):=|g(t) - \bar g(t)|_{\lambda(t),1},\quad s(t):=\|S(t)\|_{\lambda(t)}.
\]
Then \eqref{eq:d_gronwall_ready} reads
\begin{equation}\label{eq:yz_ineq}
y'(t)+z(t)\le a\,y(t)+s(t),\qquad y(0)=0.
\end{equation}
Multiply \eqref{eq:yz_ineq} by the integrating factor $e^{-at}$ to obtain
\[
\frac{\rd}{\rd t}\big(e^{-at}y(t)\big)+e^{-at}z(t)\le e^{-at}s(t).
\]
Integrating from $0$ to $t$ yields
\begin{equation}\label{eq:int_factor_est}
e^{-at}y(t)+\int_0^t e^{-a\tau}z(\tau)\,\rd\tau
\le \int_0^t e^{-a\tau}s(\tau)\,\rd\tau.
\end{equation}
From \eqref{eq:int_factor_est} we immediately deduce
\[
y(t)\le e^{at}\int_0^t e^{-a\tau}s(\tau)\,\rd\tau
\le e^{aT}\int_0^T s(\tau)\,\rd\tau,
\]
hence
\begin{equation}\label{eq:sup_y}
\sup_{t\in[0,T]}\|g(t) - \bar g(t)\|_{\lambda(t)}
\le e^{aT}\int_0^T \|S(\tau)\|_{\lambda(\tau)}\,\rd\tau.
\end{equation}
Moreover, taking $t=T$ in \eqref{eq:int_factor_est} and using $e^{-a\tau}\ge e^{-aT}$ on $[0,T]$,
\[
e^{-aT}\int_0^T z(\tau)\,\rd\tau
\le \int_0^T e^{-a\tau}z(\tau)\,\rd\tau
\le \int_0^T e^{-a\tau}s(\tau)\,\rd\tau
\le \int_0^T s(\tau)\,\rd\tau,
\]
so
\begin{equation}\label{eq:int_z}
\int_0^T |g(t) - \bar g(t)|_{\lambda(t),1}\,\rd t
\le e^{aT}\int_0^T \|S(t)\|_{\lambda(t)}\,\rd t.
\end{equation}
Combining \eqref{eq:sup_y}--\eqref{eq:int_z} gives
\begin{align*}
\|g - \bar g\|_{\Z}
&=\sup_{t\in[0,T]}\|g(t) - \bar g(t)\|_{\lambda(t)}+\int_0^T |g(t) - \bar g(t)|_{\lambda(t),1}\,\rd t \\
& \le 2e^{aT}\int_0^T \|S(t)\|_{\lambda(t)}\,\rd t.
\end{align*}
Using \eqref{eq:intS_bound}, we conclude that
\[
\|\Psi(h)-\Psi(\bar h)\|_{\Z}=\|g - \bar g\|_{\Z}
\le \kappa\|h-\bar h\|_{\Z},
\]
where
\[
\kappa=2(1+\alpha_0)\,M\,\Big(\tilde{\Phi}'(M)+2MT\,\tilde{\Phi}''(M)\Big)\,e^{\alpha_0\,\tilde{\Phi}'(M)\,M\,T}.
\]
This proves the contraction property provided $\kappa<1$.
\end{proof}

\subsection{Nonemptiness of the parameter assumptions}

\begin{lemma}\label{lem:nonempty_parameters}
Assume that $\Phi$ is real-analytic on $(-R,R)$ in the sense of Definition~\ref{def:entire}, and let $\alpha_0=\|\alpha\|_{H,\lambda_0}$ for some fixed
$\lambda_0\in(0,1)$, where $\alpha$ is as in~\eqref{alpha}. Then there exist $M\in(0,R)$, $K>0$ and $T\in(0,1)$ such that
\begin{equation}\label{eq:nonempty_goal}
0<T<1,\qquad T<\lambda_0<1,\qquad 0<K<\frac{\lambda_0}{T}-1,
\end{equation}
and
\begin{equation}\label{eq:nonempty_Mcond}
2(1+\alpha_0)M\Big(\tilde{\Phi}'(M)+2MT\,\tilde{\Phi}''(M)\Big)\,e^{\alpha_0\tilde{\Phi}'(M)MT} < 1
\le K-\lambda_0-5\,\tilde{\Phi}'(M)M.
\end{equation}
In particular, the hypotheses of Theorem~\ref{mainthm} are not empty.
\end{lemma}

\begin{proof}
Fix $\lambda_0\in(0,1)$. Since $\Phi(z)=\sum_{n\ge0}c_n z^n$ has radius of convergence $R$, the series defining
$\tilde\Phi',\tilde\Phi''$ converge for every $r\in[0,R)$ (see Remark~\ref{rem:majorant}).
In particular, on the compact interval $[0,R/2]$ the functions $\tilde\Phi',\tilde\Phi''$ are finite and bounded; set
\[
C_1:=\sup_{0\le r\le R/2}\tilde\Phi'(r)<\infty,
\qquad
C_2:=\sup_{0\le r\le R/2}\tilde\Phi''(r)<\infty.
\]
\par
For $M\in(0,R/2]$ and any $T\in(0,1)$ we have the bounds
\[
\tilde\Phi'(M)\le C_1,\qquad \tilde\Phi''(M)\le C_2,\qquad e^{\alpha_0\tilde\Phi'(M)MT}\le e^{\alpha_0 C_1 M}.
\]
Hence
\begin{align*}
2(1+\alpha_0)M\Big(\tilde{\Phi}'(M)+2MT\,\tilde{\Phi}''(M)\Big)\,e^{\alpha_0\tilde{\Phi}'(M)MT}
&\le
2(1+\alpha_0)M\big(C_1+2M C_2\big)e^{\alpha_0 C_1 M}.
\end{align*}
The right-hand side tends to $0$ as $M\downarrow 0$. Therefore we can choose $M\in(0,R/2]$ so small that
\begin{equation*}
2(1+\alpha_0)M\big(C_1+2M C_2\big)e^{\alpha_0 C_1 M} < 1.
\end{equation*}
For this choice of $M$, the strict inequality in \eqref{eq:nonempty_Mcond} holds for every $T\in(0,1)$.
\par 
Next, define
\[
K:=\lambda_0+2+5\,\tilde\Phi'(M)M.
\]
Then
\[
K-\lambda_0-5\,\tilde\Phi'(M)M=2\ge 1,
\]
so the second inequality in \eqref{eq:nonempty_Mcond} holds.
\par 
Finally, we pick
\[
T=\min\Big\{\frac12, \ \frac{\lambda_0}{2(K+1)}\Big\}.
\]
Then $0<T<1$ and $T<\lambda_0$. Moreover, $(K+1)T\le \lambda_0/2<\lambda_0$ which implies
\[
K<\frac{\lambda_0}{T}-1,
\]
which is exactly the last condition in \eqref{eq:nonempty_goal}. Thus \eqref{eq:nonempty_goal} holds.
\end{proof}

\section{Perturbations around nontrivial equilibrium solutions}\label{sec:perturb_nontrivial}

In this section we focus on time-independent (stationary) solutions of~\eqref{vlasov0}. In particular, we explain how the local-in-time analytic theory from Section~\ref{section_main} extends to small perturbations around spatially homogeneous stationary profiles.

A function $f\in L^1_{\rm loc}(\R_x \times \R_v)$ is a \emph{weak stationary state} of~\eqref{vlasov0} provided
$f(x,\cdot)\in L^1(\R_v)$ for a.e.\ $x\in\R$,
$\Phi(\rho)\in W^{1,\infty}_{\rm loc}(\R)$, and the equation
\begin{equation}\label{eq:stat_vlasov}
v\cdot\partial_x f-\partial_x\Phi(\rho)\cdot\partial_v f=0
\end{equation}
holds in the sense of distributions.
If in addition $v \mapsto v f(x,v)\in L^1(\R_v)$ and $f(x,v)\to 0$ as $|v|\to\infty$ for every $x$, then integrating~\eqref{eq:stat_vlasov} in $v$ yields that the flux
\[
J(x)\coloneqq \int v f(x,v)\,\rd v
\]
is constant. When $J\equiv 0$, the stationary state is referred to as an \emph{equilibrium}. \par

\begin{remark}\label{rem:homogeneous_steady_states}
Let $f^\ast \in L^1(\R_v)$ be independent of $x$ and set $\rho^\ast=\int f^\ast(v)\,\rd v$.
Then $\rho^\ast$ is constant and therefore $\partial_x\Phi(\rho^\ast)=0$. Hence $f(x,v)= f^\ast(v)$ is
a weak stationary state of~\eqref{vlasov0}.
\end{remark}

Next, we introduce the (local) Hamiltonian energy function
\begin{equation}\label{eq:station_H}
H(x,v)=\frac{v^2}{2}+\Phi(\rho(x)).
\end{equation}
Assuming sufficient smoothness, \emph{e.g.}\ $\Phi'(\rho) \, \partial_x \rho \in C(\R)$, the stationary transport field in~\eqref{eq:stat_vlasov} has characteristics
\begin{equation}\label{eq:stat_charact}
\dot x(s)=v(s),\qquad \dot v(s)=-\partial_x\Phi(\rho(x(s))).
\end{equation}
Along any such curve $(x(s),v(s))$, the quantity $H(x(s),v(s))$ is constant, since $\frac{\rd}{\rd s}H(x(s),v(s))=0$.
\par 
The next result provides a characterization of weak stationary states which are $C^1$ on upper and lower half (phase) spaces.

For $h\in\R$, set
\begin{equation} \label{eq:set_A(h)}
A(h):=\{x\in\R:\Phi(\rho(x))<h\}.
\end{equation}
Since $A(h)$ is an open subset of $\R$, it is a countable union of disjoint open intervals.
We denote its connected components by $\{E_k(h)\}_{k\in I(h)}$, where $I(h)\subseteq\N$.
For $x\in A(h)$, let $K(x,h)\in I(h)$ be the unique index such that $x\in E_{K(x,h)}(h)$.

\begin{proposition}\label{prop:Fk_Gk}
Let $f=f(x,v)$ be $C^1$ on $\{v < 0\} \cup \{v > 0\}$ and with $\rho = \int f\,\rd v\in C^1(\R)$. 

Then $f$ is a weak stationary state of~\eqref{vlasov0} if and only if the following hold:
\begin{enumerate}[(i)]
\item\label{it:rep_components}
There exist families of functions $\{F_k\}_{k\in\N}$ and $\{G_k\}_{k\in\N}$ such that, for all $x\in\R$,
\begin{equation} \label{eq:Fk_Gk}
f(x,v)=
\begin{dcases}
F_{K(x,H(x,v))}\!\bigl(H(x,v)\bigr), & v>0,\\
G_{K(x,H(x,v))}\!\bigl(H(x,v)\bigr), & v<0,
\end{dcases}
\end{equation}
where $H=H(x,v)$ is as in~\eqref{eq:station_H}.
\smallskip
\item\label{it:rho_components}
For every $x\in\R$,
\begin{equation}
\begin{split}
\rho(x)= & \int_0^\infty
F_{K\left(x,\frac{v^2}{2}+\Phi(\rho(x))\right)}\left(\frac{v^2}{2}+\Phi(\rho(x))\right)\rd v \\
& +
\int_0^\infty
G_{K\left(x,\frac{v^2}{2}+\Phi(\rho(x))\right)}\left(\frac{v^2}{2}+\Phi(\rho(x))\right)\rd v.
\end{split}
\end{equation}
\smallskip
\item\label{it:jump_condition_components}
For a.e.\ $x\in\R$,
\begin{equation}
\lim_{v \to 0^+} \partial_x\Phi(\rho(x)) \, F_{K(x,H(x,v))}(H(x,v)) = \lim_{v \to 0^-} \partial_x\Phi(\rho(x)) \, G_{K(x,H(x,v))}(H(x,v)).
\end{equation}
\end{enumerate}
\end{proposition}

\begin{proof}
Set $\Omega_+:=\R\times(0,\infty)$ and consider
\[
T_+:\Omega_+\to\R^2,\qquad T_+(x,v):=(x,h),\qquad h:=H(x,v)=\frac{v^2}{2}+\Phi(\rho(x)).
\]
Since $\partial_v H(x,v)=v>0$ on $\Omega_+$, the map $T_+$ is a $C^1$ diffeomorphism from $\Omega_+$ onto
\[
\Sigma_+:=\bigl\{(x,h)\in\R^2:\ h>\Phi(\rho(x))\bigr\},
\]
with inverse
\[
T_+^{-1}(x,h)=\Bigl(x,\sqrt{2\bigl(h-\Phi(\rho(x))\bigr)}\Bigr).
\]
Define $\tilde f_+:\Sigma_+\to\R$ by $\tilde f_+(x,h):=f\bigl(T_+^{-1}(x,h)\bigr)$. On $\Omega_+$, a direct chain-rule computation yields
\[
v\,\partial_x f-\partial_x\Phi(\rho)\,\partial_v f
=
v\,\partial_x\tilde f_+(x,h),
\qquad (x,h)=T_+(x,v).
\]
Hence, if $f$ is stationary on $\Omega_+$, then $\partial_x\tilde f_+=0$ on $\Sigma_+$.
\par 
Fix $h\in\R$ and note that since $\partial_x\tilde f_+=0$ on $\Sigma_+$, the function $x\mapsto \tilde f_+(x,h)$ is constant on each component $E_k(h)$ of $A(h)$. Denoting this constant by $F_k(h)$, we obtain the global representation
\begin{equation}\label{eq:global_rep_upper}
\tilde f_+(x,h)=\sum_{k\in I(h)} \mathbf 1_{E_k(h)}(x)\,F_k(h),
\qquad (x,h)\in\Sigma_+.
\end{equation}
Finally, for $v>0$ we have $(x,h)=T_+(x,v)$ with $h=H(x,v)$ and thus
\[f(x,v)=\tilde f_+(x,h)=F_{K(x,h)}(h)=F_{K(x,H(x,v))}\!\bigl(H(x,v)\bigr).\] 
An analogous argument on $\Omega_-:=\R\times(-\infty,0)$  yields functions $\{G_k\}_{k\in\N}$ such that
\[
f(x,v)=G_{K(x,H(x,v))}\!\bigl(H(x,v)\bigr)\qquad\text{for }v<0.
\]
This proves~\ref{it:rep_components} which leads to~\ref{it:rho_components} upon integration in $v$.

Write $f=f_+\mathbf 1_{\{v>0\}}+f_-\mathbf 1_{\{v<0\}}$. Since $f_\pm$ are $C^1$ on their respective half-spaces,
the distributional derivative $\partial_v f$ contains a possible Dirac mass on $\{v=0\}$ with coefficient
$f(x,0^+)-f(x,0^-)$.
Inserting this into the distributional form of~\eqref{eq:stat_vlasov} shows that this coefficient must vanish after
multiplication by $\partial_x\Phi(\rho(x))$, i.e.
\begin{equation}\label{eq:jump_coeff}
\partial_x\Phi(\rho(x))\bigl(f(x,0^+)-f(x,0^-)\bigr)=0
\qquad\text{(in the sense of distributions in $x$)}.
\end{equation}
Using~\ref{it:rep_components}, for $v>0$ we have
$f(x,v)=F_{K(x,H(x,v))}(H(x,v))$ and $H(x,v)\to \Phi(\rho(x))$ as $v\to0^+$, hence
\[
\partial_x\Phi(\rho(x))\,f(x,0^+)
=\lim_{v\to0^+}\partial_x\Phi(\rho(x))\,F_{K(x,H(x,v))}(H(x,v)).
\]
Similarly, for $v<0$,
\[
\partial_x\Phi(\rho(x))\,f(x,0^-)
=\lim_{v\to0^-}\partial_x\Phi(\rho(x))\,G_{K(x,H(x,v))}(H(x,v)).
\]
Therefore \eqref{eq:jump_coeff} is equivalent to~\ref{it:jump_condition_components}.

\par 
Conversely, \ref{it:rep_components} implies the stationary equation holds classically on $\{v\neq0\}$, and
\ref{it:jump_condition_components} eliminates the boundary contribution at $v=0$, yielding the weak formulation.
\end{proof}

\begin{remark}\label{rem:component_gluing}
As a consistency check, the representation in Proposition~\ref{prop:Fk_Gk} is naturally organized
along the nested connected components of the sublevel sets $A(h)$. Since $A(h_1)\subseteq A(h_2)$
for $h_1<h_2$, these components may split or merge as the level $h$ increases. For the
families $\{F_k\}$ and $\{G_k\}$ arising from a given function $f\in C^1(\{v>0\}\cup\{v<0\})$,
the corresponding compatibility at merger levels is automatic, since these profiles are obtained
by restricting the single $C^1$ functions $\tilde f_+$ and $\tilde f_-$ on $\Sigma_+$ and $\Sigma_-$. 
If one were instead to start from arbitrary componentwise profiles and attempt to reconstruct $f$
through~\eqref{eq:Fk_Gk}, then matching of the values at merger levels would be the natural
continuity requirement, and matching of the first derivatives with respect to $h$ the natural
$C^1$-compatibility requirement. We do not formalize this reconstruction issue here.

By contrast, condition~\ref{it:jump_condition_components} only expresses the compatibility across
$v=0$ required by the weak formulation; it is weaker than full $C^1$-matching across $\{v=0\}$.
\end{remark}

\begin{remark}\label{rem:local_F_of_H}
The representation~\eqref{eq:global_rep_upper} is a global version of the local fact that for every
$(x_0,v_0)$ with $v_0>0$ there exists a neighborhood $U(x_0,v_0)\subseteq\R^2$ and a function $F_{U(x_0,v_0)}$ such that
\begin{equation} \label{eq:implicit_local}
f(x,v)=F_{U(x_0,v_0)}(H(x,v)) \quad  \text{for} \ (x,v)\in U(x_0,v_0).
\end{equation}
This follows from the implicit function theorem since $\nabla H(x,v)\neq 0$ on $\{v>0\}$.
\end{remark}

The previous proposition simplifies substantially when we restrict to stationary states which are real-analytic
on the upper and lower half phase spaces. Arguing on the upper half space, given $(x_0,v_0)\in\R\times(0,\infty)$,
the local representation~\eqref{eq:implicit_local} yields a locally defined real-analytic function of the energy,
denoted $F_{U(x_0,v_0)}$, such that $f=F_{U(x_0,v_0)}(H)$ near $(x_0,v_0)$.
Given any $(x_1,v_1)\in\R\times(0,\infty)$, we connect it to $(x_0,v_0)$ by a smooth curve in $\R\times(0,\infty)$ and,
by analytic continuation, we obtain a ``universal''
real-analytic function $F_1=F_1(h)$ on the energy range $(m,\infty)$, where
\[
m:=\inf_{x\in\R}\Phi(\rho(x)),
\]
such that $f(x,v)=F_1(H(x,v))$ for all $v>0$. The same argument on $\R\times(-\infty,0)$ yields a real-analytic function $F_2$ on $(m,\infty)$ such that $f(x,v)=F_2(H(x,v))$ for all $v<0$. \par We obtain:

\begin{proposition}\label{prop:stationary_analytic_F1F2}
Let $f=f(x,v)$ be real-analytic on $\{v < 0\} \cup \{v > 0\}$ and with $\rho = \int f\,\rd v$ real-analytic on $\R$. Then $f$ is a weak stationary state of~\eqref{vlasov0} if and only if there exist real-analytic functions $F_1,F_2$ such that
\begin{equation}\label{eq:F1F2_analytic}
f(x,v)=
\begin{dcases}
F_1\bigl(H(x,v)\bigr), & v>0,\\
F_2\bigl(H(x,v)\bigr), & v<0.
\end{dcases}
\end{equation}
Moreover,
\begin{equation}\label{eq:rho_F1F2}
\rho(x)=\int_0^\infty 
F_1\Bigl(\frac{v^2}{2}+\Phi(\rho(x))\Bigr)+
F_2\Bigl(\frac{v^2}{2}+\Phi(\rho(x))\Bigr)
\,\rd v,
\end{equation}
and for every $z\in \R$ which is not a critical value of $\Phi(\rho)$ one has $F_1(z)=F_2(z)$.
\end{proposition}

Note that the right-hand side of~\eqref{eq:rho_F1F2} depends on $x$ only through the value $\rho(x)$.
Hence, for every $x\in\R$, $\rho(x)$ satisfies the fixed-point equation
\begin{equation}\label{eq:algebraic_rho}
\rho = \int_0^\infty F\Big(\frac{v^2}{2} + \Phi(\rho)\Big) \, \rd v,
\qquad F\coloneqq F_1+F_2.
\end{equation}
Any isolated solution $\rho$ of~\eqref{eq:algebraic_rho} yields a spatially homogeneous stationary profile of~\eqref{eq:stat_vlasov} through~\eqref{eq:F1F2_analytic}, and additionally, if $F_1=F_2$ then the flux $J=0$ and we obtain an equilibrium.
\par 
The next result provides sufficient conditions for the solvability of~\eqref{eq:algebraic_rho} for nonnegative stationary states. The proof is a simple exercise in real analysis and is therefore omitted.  

\begin{proposition}\label{prop:algebraic_unique_rho}
Assume that $F:\R\to[0,\infty)$ is nonincreasing and satisfies
\[
\int_1^\infty \frac{F(z)}{\sqrt{z}}\,\rd z<\infty, \qquad \int_0^\infty F\Big(\frac{v^2}{2}+\Phi(0)\Big)\,\rd v > 0,
\]
and that $\Phi:[0,\infty)\to\R$ is continuous and nondecreasing with $\Phi(\rho)\to+\infty$ as $\rho\to+\infty$.
Then the map
\begin{equation} \label{eq:map_Prho}
P(\rho)\coloneqq \int_0^\infty F\Big(\frac{v^2}{2}+\Phi(\rho)\Big)\,\rd v
\end{equation}
is finite for every $\rho\ge 0$, continuous, and nonincreasing on $[0,\infty)$, with $P(\rho)\to 0$ as $\rho\to\infty$.
Consequently, the fixed-point equation $\rho=P(\rho)$ admits a unique solution $\rho >  0$.
\end{proposition}

Combining Proposition~\ref{prop:stationary_analytic_F1F2} with Proposition~\ref{prop:algebraic_unique_rho}, we infer that, under the above monotonicity assumptions on $F$ and $\Phi$, any nonnegative weak stationary state in this regularity class is necessarily spatially homogeneous. The assumption that $\Phi$ is nondecreasing is natural in the hydrodynamical case $\Phi=h'$ with $h''\ge 0$. \par 
In this case, perturbations reduce to perturbations of spatially homogeneous steady states $f^\ast(v)$, and moreover, in view of the weighted formulation from Section~\ref{section_main}, one may write
$f=f^\ast+\omega u$ (with $\omega$ as in~\eqref{weight}) and set $g^\ast\coloneqq f^\ast/\omega$. Repeating the Banach fixed-point argument for the perturbation $u=g-g^\ast$ yields local analytic well-posedness for small analytic perturbations of such homogeneous steady states,
with constants depending additionally on suitable analytic norms of~$g^\ast$.

We next give sufficient conditions on $\Phi$ and $F$ under which the function $\rho \mapsto P(\rho)$ in~\eqref{eq:map_Prho} is real analytic on an open interval of $\R$.

\begin{proposition}
Assume the following.
\begin{enumerate}[(i)]
\item $\Phi$ is real analytic on an open interval $I\subseteq \R$.

\item There exist an open interval $J\subseteq \R$ and $\eps>0$ such that
\[
\big[\inf_{\rho \in I} \Phi(\rho), \infty\big) \subseteq J,
\]
and $F$ is real-analytic on $J$ and admits a holomorphic extension, still denoted by $F$, to the strip
\[
\Omega:=J+i(-\eps,\eps) \subseteq \C.\]

\item There exist $C>0$ and $\beta>1$ such that
\[
|F(z)| \le \frac{C}{1+|z|^{\beta/2}}
\qquad \text{for all } z\in \Omega.
\]
\end{enumerate}

Then the function $P(\rho)$ defined in~\eqref{eq:map_Prho} is real-analytic on $I$.
\end{proposition}

\begin{proof}
For $\alpha\in\Omega$, define
\begin{equation} \label{eq:map_S}
S(\alpha):=\int_0^\infty F\Bigl(\frac{v^2}{2}+\alpha\Bigr)\,\rd v.
\end{equation}
We first check that $S$ is well defined on $\Omega$. Let $\alpha\in\Omega$, so
$\alpha=a+ib$ with $a\in J$ and $|b|<\eps$. Since
\[
\big[\inf_{\rho\in I}\Phi(\rho),\infty\big)\subseteq J
\]
and $J$ is an interval, it follows that
\[
a+\frac{v^2}{2}\in J \qquad \text{for every } v\ge 0.
\]
Hence
\[
\frac{v^2}{2}+\alpha\in \Omega \qquad \text{for all } \alpha\in\Omega,\ v\ge 0,
\]
so the integrand is well defined.

Now let $K$ be a compact subset of $\Omega$. Since $K$ is bounded, there exists $M_K>0$ such that
$|\alpha|\le M_K$ for all $\alpha\in K$. By assumption (iii),
\[
\Bigl|F\Bigl(\frac{v^2}{2}+\alpha\Bigr)\Bigr|
\le \frac{C}{1+\bigl|\frac{v^2}{2}+\alpha\bigr|^{\beta/2}}
\le \frac{C_K}{1+v^\beta},
\qquad \alpha\in K,\ v\ge 0,
\]
for some constant $C_K>0$. Since $\beta>1$, the right-hand side is integrable on $(0,\infty)$.
Therefore $S$ is well defined and continuous on $\Omega$ by dominated convergence.

Let $\gamma$ be a closed piecewise $C^1$ curve in $\Omega$. Since the image of $\gamma$ is compact in $\Omega$,
the same estimate gives an integrable bound along $\gamma$, and Fubini's theorem yields
\[
\int_\gamma S(\alpha)\,\rd\alpha
=
\int_0^\infty \left(\int_\gamma F\Bigl(\frac{v^2}{2}+\alpha\Bigr)\,\rd\alpha\right)\rd v.
\]
For each fixed $v\ge 0$, the map
\[
\alpha\mapsto F\Bigl(\frac{v^2}{2}+\alpha\Bigr)
\]
is holomorphic on $\Omega$, since $F$ is holomorphic on $\Omega$. By Cauchy's theorem,
\[
\int_\gamma F\Bigl(\frac{v^2}{2}+\alpha\Bigr)\,\rd\alpha=0.
\]
Hence
\[
\int_\gamma S(\alpha)\,\rd\alpha=0.
\]
Since $S$ is continuous on $\Omega$, Morera's theorem implies that $S$ is holomorphic on $\Omega$.
In particular, its restriction to $J$ is real analytic. Finally, since both $S|_J$ and $\Phi$ are real analytic, it follows that $P=S\circ\Phi$ is real analytic on $I$.
\end{proof}

As a consequence, the function
\[
R(\rho):=\rho-P(\rho)
\]
is real analytic on $I$. Hence either $R$ vanishes identically on $I$, or its zeros in $I$ are isolated. In particular, every fixed point $\rho_\ast\in I$ of
\[
\rho=P(\rho)
\]
such that
\[
1-P'(\rho_\ast)\neq 0
\]
is isolated. Therefore, by the discussion following~\eqref{eq:algebraic_rho}, such a fixed point yields a spatially homogeneous stationary profile.

The holomorphic extension required above is closely related to Paley--Wiener-type results.
For instance, exponential decay of the Fourier transform $\widehat F$ of $F$ yields a holomorphic extension of $F$ to a strip in the complex plane centered on the real axis,
while additional derivative bounds on $\widehat F$ yield decay of the extension in that strip. For these Fourier-analytic criteria, see, for example,~\cite{stein2010complex}.

We conclude with a result in the spirit of the above discussion. For $f\in L^1(\R)$, we define the Fourier transform
\[
\widehat f(\xi):=\bigl(\mathcal F_{\alpha\to\xi}f\bigr)(\xi)
=\int_{-\infty}^{\infty} f(\alpha)e^{-2\pi i\alpha\xi}\,\rd\alpha,
\qquad \xi\in\R,
\]
and the inverse Fourier transform
\[
\check g(\alpha):=\bigl(\mathcal F^{-1}_{\xi\to\alpha}g\bigr)(\alpha)
=\int_{-\infty}^{\infty} g(\xi)e^{2\pi i\alpha\xi}\,\rd\xi,
\qquad \alpha\in\R.
\]

\begin{proposition}\label{prop:S_fourier_analytic}
Let $F\in L^1(\R)$ and assume that its Fourier transform satisfies the decay estimate
\[
|\widehat F(\xi)|\le M e^{-2\pi\delta|\xi|},
\qquad \xi\in\R,
\]
for some fixed constants $M>0$ and $\delta>0$. Then the function $S$ defined in~\eqref{eq:map_S} admits a holomorphic extension to the strip
\[
\R+i\Bigl(-\frac{\delta}{2},\frac{\delta}{2}\Bigr)\subseteq\C.
\]
In particular, $S$ is real analytic on $\R$.
\end{proposition}

\begin{proof}
Since $F\in L^1(\R)$, its Fourier transform $\widehat F$ exists as a bounded uniformly continuous function on $\R$.
The decay estimate on $\widehat F$ implies that $\widehat F\in L^1(\R)\cap L^2(\R)$. Hence the inversion
formula yields a continuous representative of $F$, still denoted by $F$, and
\[
F=\check{\widehat F}.
\]

Note that the alternative representation
\[
S=F*h,\qquad h(\alpha):=\frac{1}{\sqrt{2|\alpha|}}\,\mathbf 1_{\{\alpha<0\}},
\]
holds for the function $S$. Moreover,
\[
h\in L^1_{\mathrm{loc}}(\R)\cap L^{2,\infty}(\R),
\]
where $L^{2,\infty}(\R)$ denotes the weak $L^2$ space. Since $\widehat F\in L^2(\R)$, Plancherel's theorem
implies that $F\in L^2(\R)$, and therefore $F\in L^r(\R)$ for every $1\le r\le 2$. By the weak Young
inequality, it follows that
\[
S\in L^q(\R)\qquad\text{for every }2<q<\infty.
\]

A direct computation gives
\[
\widehat h(\xi)=\frac{1+i\,\operatorname{sgn}(\xi)}{\sqrt{8|\xi|}},
\qquad \xi\in\R\setminus\{0\},
\]
so that we find
\[
\widehat S(\xi)
=
\widehat F(\xi)\,\frac{1+i\,\operatorname{sgn}(\xi)}{\sqrt{8|\xi|}}.
\]
Then we have by the inversion formula
\begin{equation}\label{eq:S_fourier_rep}
S(\alpha)=\frac{1}{\sqrt8}\int_{-\infty}^{\infty}
\widehat F(\xi)\,\frac{1+i\,\operatorname{sgn}(\xi)}{\sqrt{|\xi|}}\,e^{2\pi i\alpha\xi}\,\rd\xi.
\end{equation}

We now define, for $\alpha\in \R+i\bigl(-\frac{\delta}{2},\frac{\delta}{2}\bigr)$,
\[
\widetilde S(\alpha):=
\frac{1}{\sqrt8}\int_{-\infty}^{\infty}
\widehat F(\xi)\,\frac{1+i\,\operatorname{sgn}(\xi)}{\sqrt{|\xi|}}\,e^{2\pi i\alpha\xi}\,\rd\xi.
\]
We claim that this integral is absolutely convergent. Indeed,
\[
\frac{1}{\sqrt8}\left|
\widehat F(\xi)\,\frac{1+i\,\operatorname{sgn}(\xi)}{\sqrt{|\xi|}}\,e^{2\pi i\alpha\xi}
\right|
\le
\frac12\,\frac{|\widehat F(\xi)|}{\sqrt{|\xi|}}\,e^{2\pi |\xi|\,|\Im\alpha|}
\le
\frac{M}{2}\,\frac{e^{-\pi\delta |\xi|}}{\sqrt{|\xi|}},
\]
and the right-hand side belongs to $L^1(\R)$. Hence $\widetilde S$ is well defined and continuous on the strip by dominated convergence. Moreover, for real $\alpha$,
the preceding identity shows that $\widetilde S(\alpha)=S(\alpha)$.

It remains to prove that $\widetilde S$ is holomorphic in the strip. Let $\gamma$ be a closed piecewise $C^1$
curve in $\R+i\bigl(-\frac{\delta}{2},\frac{\delta}{2}\bigr)$. Since the image of $\gamma$ is compact,
the same estimate shows that the integrand belongs to $L^1(\R\times \gamma;\,\rd\xi\,|\rd\alpha|)$, and Fubini's theorem yields
\[
\int_\gamma \widetilde S(\alpha)\,\rd\alpha
=
\frac{1}{\sqrt8}\int_{-\infty}^{\infty}
\widehat F(\xi)\,\frac{1+i\,\operatorname{sgn}(\xi)}{\sqrt{|\xi|}}
\left(\int_\gamma e^{2\pi i\alpha\xi}\,\rd\alpha\right)\rd\xi.
\]
For each fixed $\xi\in\R$, the map $\alpha\mapsto e^{2\pi i\alpha\xi}$ is entire, and therefore
\[
\int_\gamma e^{2\pi i\alpha\xi}\,\rd\alpha=0.
\]
Hence
\[
\int_\gamma \widetilde S(\alpha)\,\rd\alpha=0.
\]
Since $\widetilde S$ is continuous in the strip, Morera's theorem implies that $\widetilde S$ is holomorphic there.
Because $\widetilde S$ extends $S$, the proof is complete.
\end{proof}

\appendix

\section{Banach spaces of analytic functions} \label{section_complete_metric_space}
\begin{lemma}
Let $\lambda > 0$ and consider the space $W_\lambda$ of those functions $f \in C^\infty(\mathbb{R})$ satisfying \[\| f \|_\lambda = \sum_{k \geq 0} \frac{\lambda^k}{k!} \| 
\partial_x^k f\|_\infty < \infty.\] Then $W_\lambda$ is a Banach space under the norm $\| \idot \|_\lambda.$
\end{lemma}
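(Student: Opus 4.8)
The plan is to first confirm that $\|\cdot\|_\lambda$ is genuinely a norm on $W_\lambda$ — positivity, homogeneity, and the triangle inequality all pass termwise from the corresponding properties of $\|\cdot\|_\infty$, and $W_\lambda$ is clearly a vector space — and then to establish completeness, which is the real content. So I would take a Cauchy sequence $(f_m)$ in $W_\lambda$ and produce a limit in $W_\lambda$.

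The first key observation is that for every fixed $k \in \mathbb{N}_0$,
\[
\frac{\lambda^k}{k!}\|\partial_x^k f_m - \partial_x^k f_n\|_\infty \leq \|f_m - f_n\|_\lambda,
\]
so each sequence $(\partial_x^k f_m)_m$ is Cauchy in the Banach space $C_b(\mathbb{R})$ of bounded continuous functions under the supremum norm. By completeness of $C_b(\mathbb{R})$ there exist $g_k \in C_b(\mathbb{R})$ with $\|\partial_x^k f_m - g_k\|_\infty \to 0$ as $m \to \infty$, for each $k$.

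Next I would set $f := g_0$ and show that $f$ is smooth with $\partial_x^k f = g_k$; this is the one step I expect to require genuine care. Here I would invoke the classical theorem on termwise differentiation of sequences: if $f_m \to g_0$ pointwise and $\partial_x f_m \to g_1$ uniformly on each bounded interval, then $g_0$ is differentiable with $g_0' = g_1$. Since uniform convergence on $\mathbb{R}$ gives uniform convergence on every compact set, an induction on $k$ — at each stage applying the theorem to the pair $(\partial_x^k f_m, \partial_x^{k+1} f_m)$ — yields $f \in C^\infty(\mathbb{R})$ and $\partial_x^k f = g_k$ for all $k$.

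Finally I would upgrade the per-derivative convergence to convergence in $\|\cdot\|_\lambda$. Given $\varepsilon > 0$, choose $N$ with $\|f_m - f_n\|_\lambda \leq \varepsilon$ for $m,n \geq N$; then for any truncation level $K$,
\[
\sum_{k=0}^{K} \frac{\lambda^k}{k!}\|\partial_x^k f_m - \partial_x^k f_n\|_\infty \leq \varepsilon.
\]
Letting $n \to \infty$ is legitimate because the sum is finite, so the sup-norm limits $\partial_x^k f_n \to \partial_x^k f$ pass through termwise, giving $\sum_{k=0}^{K} \frac{\lambda^k}{k!}\|\partial_x^k f_m - \partial_x^k f\|_\infty \leq \varepsilon$; then letting $K \to \infty$ yields $\|f_m - f\|_\lambda \leq \varepsilon$ for all $m \geq N$. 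In particular $f_m - f \in W_\lambda$, whence $f = f_m - (f_m - f) \in W_\lambda$, and $\|f_m - f\|_\lambda \to 0$, completing the proof. The same argument transports verbatim to the two-variable norm \eqref{normlambda} used in the main text, which is the version actually needed for the completeness of $\mathrm{Z}_{\lambda_0, K, T}$.
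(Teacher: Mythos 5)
Your proof is correct, and its skeleton coincides with the paper's: extract sup-norm limits $g_k$ of each derivative sequence $(\partial_x^k f_m)$ from completeness of $C_b(\mathbb{R})$, identify $g_k = \partial_x^k g_0$, and then upgrade to convergence in $\|\cdot\|_\lambda$. The two places where you diverge are in the tools used for the last two steps. For the identification of derivatives, the paper argues distributionally: it integrates by parts against a test function $\varphi \in C_c^\infty(\mathbb{R})$, passes to the limit to conclude that $g_1$ is the weak derivative of $g_0$, and then uses continuity of $g_1$ to conclude $g_0 \in C^1$ with $g_0' = g_1$; you instead invoke the classical theorem on termwise differentiation under uniform convergence of derivatives on bounded intervals. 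Both inductions are valid; yours is the more elementary route (no appeal to weak derivatives), while the paper's is slightly more robust in that it never needs pointwise convergence hypotheses, only $L^1_{loc}$-type convergence against test functions. For the final step, the paper applies Fatou's lemma (with respect to counting measure in $k$) to the inequality $\|f_n - f_m\|_\lambda < \varepsilon$, whereas you truncate the series at level $K$, pass to the limit in $n$ term by term in the finite sum, and then let $K \to \infty$; your truncation argument is in effect a self-contained proof of exactly the instance of Fatou that the paper cites, so the content is the same. You also verify the norm axioms and note the transfer to the two-variable setting, which the paper handles only implicitly (the latter in a separate remark); neither addition is necessary, but both are harmless.
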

\begin{proof}
Let $(f_n) \subseteq W_\lambda$ be a Cauchy sequence. Then, for each $k \in \mathbb{N}_0$, $(\partial_x^k f_n)$ is a Cauchy sequence of the Banach space $C_b(\mathbb{R})$ of bounded continuous functions, and so, there exists $g_k \in C_b(\mathbb{R})$ such that $\|\partial_x^k f_n - g_k \|_\infty \to 0$ as $n \to \infty$. Set $f = g_0$. We claim that for each $k \in \mathbb{N}_0$, $g_k = \partial_x^k f$. Let $\varphi \in C_c^\infty(\mathbb{R})$. Then
\[\int_\mathbb{R} \partial_x f_n \cdot \varphi \, \rd x = - \int_\mathbb{R} f_n \cdot \partial_x \varphi \, \rd x \]
which, by letting $n \to \infty$ yields
\[ \int_\mathbb{R} g_1 \cdot \varphi \, \rd x = - \int_\mathbb{R} f \cdot \partial_x \varphi \, \rd x.\]
Hence, $g_1$ is the weak derivative of $f$, and since $g_1$ is continuous, we have that $f \in C^1(\mathbb{R})$ and $g_1 = \partial_x f$. The claim follows by a standard inductive argument. \par 
Moreover, given $\varepsilon > 0$, there exists $N \in \mathbb{N}$ such that for $n,m \geq N$, $\|f_n - f_m \|_\lambda < \varepsilon.$ Letting $m \to \infty$ combined with Fatou's lemma results in $\|f_n - f \|_\lambda < \varepsilon$ for $n \geq N$. Thus, $\|f\|_\lambda \leq \|f_N - f \|_\lambda + \|f_N \|_\lambda < \infty $. This proves that $(f_n)$ has a limit in $W_\lambda$, establishing the result.
\end{proof}

\begin{lemma}
Let $\lambda,T > 0$ and consider the space $Y_{\lambda,T}$ of those functions $f \in C\big([0,T]; C^\infty(\mathbb{R})\big)$ satisfying \[\sup\limits_{t\in[0,T]} \|f(t) \|_\lambda = \sup\limits_{t\in[0,T]} \sum_{k \geq 0} \frac{\lambda^k}{k!} \| 
\partial_x^k f(t) \|_\infty < \infty.\] Then $Y_{\lambda,T}$ is a Banach space under the norm $\sup\limits_{[0,T]} \| \idot \|_\lambda$.
\end{lemma}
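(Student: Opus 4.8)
The plan is to recognize $Y_{\lambda,T}$ as the space of bounded continuous maps from $[0,T]$ into the Banach space $(W_\lambda, \|\cdot\|_\lambda)$ established in the previous lemma, equipped with the supremum norm, and then to run the standard completeness argument for such spaces while carefully verifying that the candidate limit genuinely lies in $C\big([0,T]; C^\infty(\mathbb{R})\big)$.

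First I would take a Cauchy sequence $(f_n) \subseteq Y_{\lambda,T}$. For each fixed $t \in [0,T]$ the inequality $\|f_n(t) - f_m(t)\|_\lambda \leq \sup_{s \in [0,T]} \|f_n(s) - f_m(s)\|_\lambda$ shows that $(f_n(t))_n$ is Cauchy in $W_\lambda$, so by completeness of $W_\lambda$ it converges in $\|\cdot\|_\lambda$ to some $f(t) \in W_\lambda$. To upgrade this to uniform-in-$t$ convergence, given $\varepsilon > 0$ I would fix $N$ with $\sup_s \|f_n(s) - f_m(s)\|_\lambda < \varepsilon$ for $n,m \geq N$, then fix $t$ and let $m \to \infty$; since $f_m(t) \to f(t)$ in $\|\cdot\|_\lambda$, this yields $\|f_n(t) - f(t)\|_\lambda \leq \varepsilon$, and taking the supremum over $t$ gives $\sup_t \|f_n(t) - f(t)\|_\lambda \leq \varepsilon$ for all $n \geq N$.

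The membership $\sup_t \|f(t)\|_\lambda < \infty$ then follows from $\|f(t)\|_\lambda \leq \|f(t) - f_N(t)\|_\lambda + \|f_N(t)\|_\lambda \leq \varepsilon + \sup_s\|f_N(s)\|_\lambda$, which is uniform in $t$, and the convergence $\sup_t \|f_n(t) - f(t)\|_\lambda \to 0$ is precisely the uniform estimate obtained above. The one point requiring care --- and the main obstacle --- is verifying that the limit $f$ belongs to $C\big([0,T]; C^\infty(\mathbb{R})\big)$, i.e.\ that $t \mapsto f(t)$ is continuous into $C^\infty(\mathbb{R})$. Here I would exploit the domination $\|\partial_x^k u\|_\infty \leq \tfrac{k!}{\lambda^k}\|u\|_\lambda$, immediate from the definition of $\|\cdot\|_\lambda$: uniform-in-$t$ convergence in $\|\cdot\|_\lambda$ forces, for each fixed $k$, the uniform-in-$t$ convergence $\partial_x^k f_n(t) \to \partial_x^k f(t)$ in the sup norm. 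Since each map $t \mapsto \partial_x^k f_n(t)$ is continuous by hypothesis, its uniform limit $t \mapsto \partial_x^k f(t)$ is continuous for every $k$, so $f \in C\big([0,T]; C^\infty(\mathbb{R})\big)$. Collecting these facts shows $f \in Y_{\lambda,T}$ with $f_n \to f$ in the norm of $Y_{\lambda,T}$, establishing completeness.
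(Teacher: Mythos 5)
Your proposal is correct and follows essentially the same route as the paper: pointwise-in-$t$ limits via the completeness of $W_\lambda$, an upgrade to uniform-in-$t$ convergence by letting $m \to \infty$ in the Cauchy estimate, and then continuity of the limit in time. The only difference is that where the paper closes with ``a standard argument then implies that $f$ is continuous in time,'' you actually supply that argument, using the bound $\|\partial_x^k u\|_\infty \leq \tfrac{k!}{\lambda^k}\|u\|_\lambda$ together with the uniform limit theorem for each derivative, which is a welcome filling-in of detail rather than a different method.
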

\begin{proof}
Let $(f_n) \subseteq Y_{\lambda,T}$ be a Cauchy sequence. Then, for each $t \in [0,T]$, $(f_n(t))$ is Cauchy in the space $W_\lambda$ defined in the previous lemma. Therefore, there exists $g_t \in W_\lambda$ such that $\|f_n(t) - g_t \|_\lambda \to 0$ as $n \to \infty$. Define $f : [0,T] \to C^\infty(\mathbb{R})$ by $f(t) = g_t$. Note that, for every $n,m \in \mathbb{N}$ and any $t \in [0,T]$, there holds
\[\big| \|f(t) - f_n(t) \|_\lambda - \| f_m(t) - f_n(t) \|_\lambda   \big| \leq \| f(t) - f_m(t) \|_\lambda \]
and so
\[ \| f(t) - f_n(t) \|_\lambda = \lim\limits_{m \to \infty}\| f_m(t) - f_n(t) \|_\lambda. \]\par 
Given $\varepsilon > 0$, there exists $N \in \mathbb{N}$ such that for $n,m \geq N$ and all $t \in [0,T]$ we have $\|f_n(t) - f_m(t) \|_\lambda \leq \varepsilon$. Letting $m \to \infty$ yields $\|f_n(t) - f(t) \|_\lambda < \varepsilon$ for all $n \geq N$ and $t \in [0,T]$. A standard argument then implies that $f$ is continuous in time, which concludes the proof.
\end{proof}
\begin{remark}
The previous two lemmas also hold if $C^\infty(\mathbb{R})$ is replaced by $C^\infty(\mathbb{R}^2)$, in which the norm $\| \idot \|_\lambda$ becomes \[\| f \|_\lambda = \sum_{k,\ell \geq 0} \frac{\lambda^{k+\ell}}{k! \ell!} \| 
\partial_x^k \partial_v^\ell f\|_\infty. \]
\end{remark}

\begin{proposition}\label{prop:Z_complete_time}
Let $\lambda_0, K, T > 0 $ be such that $\lambda_0 > (K+1)T$ and consider the function $\lambda(t) = \lambda_0 - (K+1)t$ for $t \in [0,T]$. The space $\mathrm{Z}_{\lambda_0, K, T}$ defined in (\ref{spaceZ}) is a Banach space under the norm $\|\idot\|_\Z$ given by \eqref{normZ}.
\end{proposition}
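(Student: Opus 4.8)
The plan is to reduce completeness in the mixed norm $\|\cdot\|_\Z$ to the single-radius completeness results already proved, using the fact that $\lambda(t)\le\lambda_0$ throughout, and then to dispatch the moving-radius integral term by a Fatou argument. First I would take a Cauchy sequence $(f_n)$ in $\mathrm{Z}_{\lambda_0,K,T}$ and observe that, since $\|f_n-f_m\|_\Z\ge\sup_{t\in[0,T]}\|f_n(t)-f_m(t)\|_{\lambda_0}$, the sequence is automatically Cauchy in the space $Y_{\lambda_0,T}$ of the previous lemma (in its $\R^2$ version supplied by the remark). Completeness of $Y_{\lambda_0,T}$ then furnishes the candidate limit $f\in C\big([0,T];C^\infty(\R^2)\big)$ with $\sup_{t\in[0,T]}\|f_n(t)-f(t)\|_{\lambda_0}\to 0$; its continuity in time and membership in $\mathrm{Z}_{\lambda_0,K,T}$ will follow from that lemma together with $\|f\|_\Z\le\|f-f_n\|_\Z+\|f_n\|_\Z$ once the convergence below is established.

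Next I would extract the pointwise information needed for the integral term. Since each single derivative is dominated by the full norm, namely
\[
\|\partial_x^k\partial_v^l\big(f_n(t)-f(t)\big)\|_\infty\le\frac{k!\,l!}{\lambda_0^{\,k+l}}\,\|f_n(t)-f(t)\|_{\lambda_0},
\]
the $Y_{\lambda_0,T}$ convergence gives, for every fixed $(k,l)\in\N_0^2$ and uniformly in $t$, that $\|\partial_x^k\partial_v^l(f_n(t)-f(t))\|_\infty\to 0$. In particular, for each fixed $t$ and $(k,l)$,
\[
\|\partial_x^k\partial_v^l\big(f_n(t)-f_m(t)\big)\|_\infty\longrightarrow\|\partial_x^k\partial_v^l\big(f_n(t)-f(t)\big)\|_\infty\qquad(m\to\infty).
\]

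The remaining task is the integral term. Fixing $\eps>0$ and $N$ so that $\int_0^T|f_n(t)-f_m(t)|_{\lambda(t),1}\,\rd t<\eps$ for $n,m\ge N$, I would recall the explicit series
\[
|f_n(t)-f_m(t)|_{\lambda(t),1}=\sum_{k+l\ge1}(k+l)\frac{\lambda(t)^{k+l-1}}{k!\,l!}\,\|\partial_x^k\partial_v^l\big(f_n(t)-f_m(t)\big)\|_\infty,
\]
and apply Fatou's lemma to the product of counting measure in $(k,l)$ and Lebesgue measure in $t$, using the termwise convergence just recorded, to obtain $\int_0^T|f_n(t)-f(t)|_{\lambda(t),1}\,\rd t\le\liminf_m\int_0^T|f_n(t)-f_m(t)|_{\lambda(t),1}\,\rd t\le\eps$ for $n\ge N$. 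Combining this with $\sup_t\|f_n(t)-f(t)\|_{\lambda_0}\to0$ yields $\|f_n-f\|_\Z\le2\eps$ for all large $n$, proving $f_n\to f$ in $\mathrm{Z}_{\lambda_0,K,T}$.

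The principal subtlety, and the step I would treat most carefully, is that $\|\cdot\|_\Z$ mixes two scales: the supremum is taken at the fixed radius $\lambda_0$, whereas the integral uses the moving radius $\lambda(t)$. The resolution — the reason no separate completeness argument is needed for the integral part — is that completeness at the single radius $\lambda_0$ already delivers uniform-in-time convergence of every individual derivative $\partial_x^k\partial_v^l$, which is precisely the ingredient the Fatou passage to the limit requires inside the $\lambda(t)$-weighted series. Because $\lambda(t)\le\lambda_0$, the limit produced by $Y_{\lambda_0,T}$ is automatically admissible for the integral term as well, so both halves of the norm are governed by one and the same $f$.
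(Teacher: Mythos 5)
Your proof is correct and follows essentially the same route as the paper's: reduce to completeness of the fixed-radius space $Y_{\lambda_0,T}$ (in its $\mathbb{R}^2$ version) to obtain the candidate limit and uniform convergence of each derivative, then handle the moving-radius integral term via Fatou's lemma applied to the counting-measure-in-$(k,l)$ times Lebesgue-in-$t$ series, letting $m\to\infty$ inside the Cauchy estimate. The only cosmetic difference is that you make explicit the elementary bound $\|\partial_x^k\partial_v^l(f_n(t)-f(t))\|_\infty\le \frac{k!\,l!}{\lambda_0^{k+l}}\|f_n(t)-f(t)\|_{\lambda_0}$, which the paper uses implicitly.
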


\begin{proof}
Set $\lambda_T=\lambda(T)$ and let $(f_n)\subseteq \mathrm Z_{\lambda_0,K,T}$ be Cauchy in $\|\idot\|_{\Z}$.
Since $\lambda(t)\ge \lambda_T$ for all $t$, we have
\[
\| \idot \|_{\lambda_T}\le \|\idot\|_{\lambda(t)},\qquad |\idot |_{\lambda_T,1}\le |\idot|_{\lambda(t),1}.
\]
Hence $(f_n)$ is Cauchy in space $Y_{\lambda_T,T}$ (in $\R^2$), so there exists
$f\in C\big([0,T];C^\infty(\R^2)\big)$ such that
\[
\sup_{t\in[0,T]}\|f_n(t)-f(t)\|_{\lambda_T}\to 0.
\]

Fix $t\in[0,T]$. Since
\[
\|f_n(t)-f_m(t)\|_{\lambda(t)}\le \sup_{\tau\in[0,T]}\|f_n(\tau)-f_m(\tau)\|_{\lambda(\tau)}
\le \|f_n-f_m\|_{\Z},
\]
the sequence $(f_n(t))$ is Cauchy in the Banach space $W_{\lambda(t)}$, hence converges in $\|\idot\|_{\lambda(t)}$
to a limit, which must equal $f(t)$ by uniqueness of the limit in the weaker norm $\|\idot\|_{\lambda_T}$.
Therefore,
\[
\|f_n(t)-f(t)\|_{\lambda(t)}\to 0\qquad\text{for every }t\in[0,T].
\]
Taking the supremum over $t \in [0,T]$ and using $\limsup$,
\begin{align*}
\sup_{t\in[0,T]}\|f_n(t)-f(t)\|_{\lambda(t)}
& \le \limsup_{m\to\infty}\sup_{t\in[0,T]}\|f_n(t)-f_m(t)\|_{\lambda(t)} \\
& \le \limsup_{m\to\infty}\|f_n-f_m\|_{\Z}\to 0 \quad \text{as} \ n \to \infty.
\end{align*}
For the integral term, fix $n$ and use Fatou's lemma to obtain
\[
\int_0^T |f_n(t)-f(t)|_{\lambda(t),1}\,\rd t
\le \liminf_{m\to\infty}\int_0^T |f_n(t)-f_m(t)|_{\lambda(t),1}\,\rd t.
\]
Since $(f_n)$ is Cauchy in $\|\idot\|_{\Z}$, the right-hand side tends to $0$ as $n\to\infty$.
Hence $\|f_n-f\|_{\Z}\to 0$, proving completeness.
\end{proof}

\section{Maximum principle} \label{section_maximum_principle}

Suppose that $g = g(t,x,v)$ is a smooth solution of the equation 
\[\partial_t g + v \cdot \partial_x g + a(t,x) \cdot \partial_v g = G(t,x,v) \]
where $a$ is smooth in time and globally Lipschitz in space, and $G$ is smooth and bounded. 
\par Then 
\begin{equation} \label{maximumprinciple}
\frac{\rd}{\rd t} \|g(t) \|_{\infty} \leq \| G(t)\|_{\infty}.
\end{equation} Indeed, using characteristics we have:
\[\frac{\rd}{\rd t} g(t, x(t), v(t)) = G(t, x(t), v(t)), \]
where $(x(t), v(t))$ solves
\begin{equation*}
\begin{cases}
\dot{x}(t) = v(t), \\
\dot{v}(t) = a(t,x(t)), \\
x(0) = x_0, \ v(0) = v_0.
\end{cases}
\end{equation*}
Fix $t \in (0,T)$ and $h>0$ such that $t+h \leq T$. Integrating over $[t, t + h]$ yields
\begin{align*}
g(t+h, x(t+h), v(t+h)) & = g(t, x(t), v(t)) + \int_t^{t+h} G(\tau, x(\tau), v(\tau)) \, \rd \tau 
\end{align*}
hence
\begin{align*}
\big|g(t+h, x(t+h), v(t+h))\big|
& \leq \|g(t)\|_\infty + \int_t^{t+h} \| G(\tau)\|_{\infty} \, \rd \tau.
\end{align*}
Since the initial values $(x_0,v_0)$ are arbitrary, it follows that
\begin{equation*}
\|g(t+h) \|_\infty - \|g(t) \|_\infty \leq \int_t^{t+h} \| G(\tau)\|_{\infty} \, \rd \tau.
\end{equation*}
Similarly, for $h>0$ such that $t-h \geq 0$,
\begin{equation*}
\|g(t) \|_\infty - \|g(t-h) \|_\infty \leq \int_{t-h}^t \| G(\tau)\|_{\infty} \, \rd \tau.
\end{equation*}
The inequality (\ref{maximumprinciple}) is obtained by dividing both previous expressions by $h$ and then letting $h \to 0^+$.

\section*{Acknowledgments}
We thank Anne Nouri and Pierre-Emmanuel Jabin for helpful comments on an early version of this manuscript. We also thank the anonymous referees for suggestions that substantially improved the final presentation. This work was supported by the Austrian Science Fund (FWF), project 10.55776/F65, and by KAUST baseline funding.

\end{document}